\newtheorem{myremark}{Remark}
\newcommand{\xqedhere}[2]{%
	\rlap{\hbox to#1{\hfil\llap{\ensuremath{#2}}}}}
\def\Q{{\bar{Q}}}
\def\T{{\bar{T}}}
\def\q{{\bar{q}}}
\def\x{{\bar{x}}}
\def\y{{\bar{y}}}
\def\conv{{\rm conv }\,}
\def\st{\hbox{s.t.}}
\def\[{\begin{equation}}
\def\]{\end{equation}}
\def\d{\displaystyle}
\def\argmax{{\rm argmax}\,}
\def\|{\parallel}
\begin{document}

\title{An exact separation algorithm for unsplittable flow capacitated network design arc-set polyhedron
}

\titlerunning{An exact separation algorithm for unsplittable flow arc-set polyhedron}      

\author{Liang Chen$^{\star}$ \and Wei-Kun Chen$^{\dag}$ \and\\
	 Mu-Ming Yang$^{\star}$ \and Yu-Hong Dai$^{\star}$ \thanks{This work was supported by the Chinese Natural Science Foundation
	 	(Nos. 11631013, 11331012) and the National 973 Program of China (No. 2015CB856002).}}
\authorrunning{Liang Chen et al.} 	
 	
\institute{
	$^{\star}$LSEC, ICMSEC,
	Academy of Mathematics and Systems Science, Chinese Academy of Sciences, Beijing, China \and School of Mathematical Sciences, University of Chinese Academy of Sciences, Beijing, China.
	{\sl Emails}: chenliang@lsec.cc.ac.cn,
	ymm@lsec.cc.ac.cn, dyh@lsec.cc.ac.cn\\
	$^{\dag}$School of Mathematics and Statistics/Beijing Key Laboratory on MCAACI, Beijing Institute of Technology, Beijing, China {\sl Emails}: chenweikun@bit.edu.cn (Corresponding author)
}


\date{Received: date / Accepted: date}

\maketitle

\begin{abstract}
In this paper, we concentrate on generating cutting planes for the unsplittable capacitated network design problem.
We use the unsplittable flow arc-set polyhedron of the considered problem as a substructure and generate cutting planes by solving the separation problem over it.
To relieve the computational burden, we show that, in some special cases, a closed form of the separation problem can be derived.
For the general case, a brute-force algorithm, called  exact separation algorithm, is employed in solving the separation problem of the considered polyhedron such that the constructed inequality guarantees to be facet-defining.
%
%
Furthermore, a new technique is presented to accelerate the exact separation algorithm, which significantly decreases the number of iterations in the algorithm.
Finally, a comprehensive computational study on the unsplittable capacitated network design problem is presented to demonstrate the effectiveness of the proposed algorithm.

\keywords{Cutting planes \and Exact separation \and Flow arc-set polyhedron \and Unsplittable capacitated network design}
\subclass{90C11\and90C27}
\end{abstract}

\section{Introduction}
The unsplittable capacitated network design problem plays an important role in many applications such as telecommunication network design, production distribution, and express package delivery; see \cite{Barnhart2000,Brockmuller2004,Gavish1990} and the references therein.
Given a network, a demand set with its origin-destination pairs of nodes for commodities, and a facility set with different types of facilities (with varying capacities and installation costs), the unsplittable capacitated network design problem is to install integer multiples of facilities on each arc of the network and route the flow of each commodity on a single path such that the total flow cannot exceed the total capacity on each arc, and the sum of facility installation costs and flow routing costs is minimized while meeting the demands of the commodities.

Let $G = (V, E)$ be a directed graph with the node set $V$ and the arc set $E$.
Denote $Q$ and $T$ be the sets of commodities and facilities, respectively.
The demand of commodity $q \in Q$ from the source node $\zeta^q$ to the destination node $\eta^q$ is known as $a_q$.
Suppose that $w^{ij}_q$ is the routing cost for commodity $q$ on arc 
$(i,j)\in E$. 
If one module of facility $t\in T$ is installed on arc $ (i,j) $, let $b_t$ 
and $p^{ij}_t$ represent the additional capacity and the 
installation cost on arc $ (i,j) $, respectively. The existing capacity 
on arc $(i,j)$ is known as $c^{ij}$. We introduce the binary variable $ x^{ij}_q $ to denote whether or not  commodity $ q $ goes through arc $ (i,j) $. The variable $ y^{ij}_t $ denotes the number of facility $ t $ installed on arc $ (i,j) $. With these notations and variables, the mathematical formulation of the unsplittable capacitated network design problem is
\begin{align}
 \d\min_{x,y}\ &\d\sum_{(i,j)\in E}\sum_{q\in Q} w_q^{ij}x_q^{ij}+ \sum_{(i,j)\in E}\sum_{t\in T} p_t^{ij}y_t^{ij},\label{obj}&\\
\st\ & \d\sum_{(i,j)\in E}x^{ij}_q - \!\! \sum_{(j,i)\in E}x^{ji}_q = 
\left\{
\begin{array}{rl}
1,     & \text{if}~ i = \zeta^q;  \\
-1,     & \text{if}~ i = \eta^q;    \\
0,    &  \text{otherwise},     
\end{array} \label{balance}\right.
\!\!\!\!\! &&\forall\,q\in Q,~i\in V,\\
&\d\sum_{q\in Q}a_qx_q^{ij}\leq \sum_{t\in T}b_ty^{ij}_t + c^{ij}, && \forall\,
(i,j)\in E, \label{cap}\\ 
& x^{ij}_q\in \{0,1\},~y_t^{ij} \in \mathbb{Z}_+, \ &&\forall\,q\in Q,\,t\in 
T,\,(i,j)\in E.\label{domain}
\end{align}
In the above formulation, we minimize the sum of facility installation costs and flow routing costs in the objective function \eqref{obj}. Constraint \eqref{balance} is the flow balance constraint.
Constraint \eqref{cap} is the \emph{capacity constraint} that requires that the total flow cannot exceed the total capacity on each arc.

Problem \eqref{obj}-\eqref{domain} is $\mathcal{NP}$-hard even for $|Q|=1$ and $|T|=1$ \cite{CHOPRA1998165}.
Hence there is little hope to develop a theoretically efficient algorithm for solving it. 
Nevertheless, several polyhedral studies of some special cases of the problem have been done in the literature \cite{Achterberg2010,Atamturk2002,Barnhart2000,Benhamiche2016,Brockmuller1996,Brockmuller2004,Gavish1990,Raack2011,vanHoesel2002}, which suggests us that it is possible to develop a computationally efficient algorithm if the polyhedral structure is well understood. Inspired by this, in this paper, we consider the convex hull of the set related to the {capacity constraint} on each arc, i.e., the so-called \emph{unsplittable flow arc-set polyhedron} $ P =  \conv(X) $ where 
$$X = \left \{(x,y)\in\{0,1\}^{|Q|}\times\mathbb{Z}_+^{|T|}: \sum_{q\in 
Q}a_qx_q \leq \sum_{t\in T}b_ty_t+ c\right\}.$$
Here the arc subscripts on variables $x^{ij}_t$ and $ y^{ij}_t$, and parameter $c^{ij}_t$ are dropped.

There exist several works studying the unsplittable flow arc-set 
polyhedron. In particular, Brockm{\"u}ller et al. 
\cite{Brockmuller1996,Brockmuller2004} developed the \emph{c-strong inequality} for 
the unsplittable flow arc-set polyhedron when there are only two facilities and 
the capacity of the second facility is an integer multiple of {that of the first 
one}. 
For problems with a single facility, i.e., $|T|=1$,  Atamt{\"u}rk and Rajan \cite{Atamturk2002} proposed the \emph{$k$-split c-strong inequality} and the \emph{lifted cover inequality}. 
Van Hoesel et al. \cite{vanHoesel2002} studied the \emph{lower convex envelope inequality}. 
Their computational experiments on the c-strong inequality, the $k$-split c-strong inequality, the lifted cover inequality, and the lower convex envelope inequality demonstrate the effectiveness of integrating these inequalities in a branch-and-cut framework.
Benhamiche et al. \cite{Benhamiche2016} generalized the c-strong inequality to solve a variant of the unsplittable capacitated network design problem.

Unfortunately, most of these studies are restricted to the unsplittable flow arc-set polyhedron with a single facility or two facilities with divisible capacities. 
The valid inequalities developed under these assumptions cannot be applied in the context of an arbitrary number of facilities and arbitrary capacities.

In this study, we do not make assumptions on either the number of facilities or the structure of the capacities. 
Instead, our approach is to develop an exact separation algorithm to solve the separation problem of the unsplittable flow arc-set polyhedron $ P $ with an arbitrary number of facilities and arbitrary capacities. 
More precisely, given a point $ (\x,\y)  \in \mathbb{R}^{|Q|}\times \mathbb{R}^{|T|} $, we want to generate a hyperplane to separate point $ (\x,\y) $ from $ P $ or prove that point $ (\x,\y) \in P$.
To do this, we first analyze the coefficients in the nontrivial facet-defining inequality of {polyhedron} $P$, which is employed in formulating the separation problem as an optimization problem. 
We prove that the solution of the optimization problem corresponds to a facet-defining inequality of {polyhedron} $P$. 
To relieve the computational burden, we show that, in some special cases, a closed form of the optimization problem can be derived.
For the general case, the exact separation algorithm, which includes the four steps: preprocessing, row generation, numerical errors, and sequential lifting, is employed in solving the optimization problem.
Furthermore, a new technique is presented to accelerate the exact separation algorithm, which significantly decreases the number of iterations in the row generation subroutine.
Finally, a comprehensive computational study is presented to test the effectiveness of the proposed algorithm.

It is worth noting that the considered exact separation for {the} unsplittable flow arc-set polyhedron can be seen as an extension of the exact separation for {the} 0-1 knapsack polytope; see \cite{Avella2013,Avella2010,Boccia2013,Boyd1993,Boyd1994,Boyd1995,Kaparis2010,Vasilyev2016} and the references therein. 
The difference is that {the} exact separation for {the} 0-1 knapsack polytope cannot handle non-binary integer variables, whereas the approach in this paper takes the non-binary integer variables into consideration such that it can be customized to solve the unsplittable capacitated network design problem.

The organization of this paper is as follows. In Sect. \ref{sect:separationproblem}, we analyze the {properties} of the nontrivial facet-defining inequalities of polyhedron $P$ and formulate the separation problem as an optimization problem.
In Sect. \ref{generalcond}, We consider some special cases for which a closed form of the optimization problem can be derived. 
In Sect. \ref{sect:ExactSeparation}, we give a framework of the exact separation algorithm including preprocessing in Sect. \ref{subsect:Preprocessing}, row generation in Sect. \ref{subsect:RowGeneration}, numerical errors in Sect. \ref{subsect:NumericalErrors}, and sequential lifting in Sect. \ref{subsect:SequentialLifting}.
In Sect. \ref{sect:numericalresults}, we present the numerical results. 
Finally, in Sect. \ref{sect:conclusionandfuturework}, we give some conclusions and future works.

Throughout this paper, let $\boldsymbol{e^i}\in \mathbb{R}^{|Q|}$ and $ 
\boldsymbol{f^j}\in \mathbb{R}^{|T|}$ be the $i$-th $ |Q| $-dimensional unit 
vector and $j$-th $ |T| $-dimensional unit vector, respectively. 
Denote $\boldsymbol e=(1,1,\ldots,1)^{\top}\in \mathbb{R}^{|Q|}$ and $\boldsymbol f = (1,1,\ldots,1)^{\top}\in \mathbb{R}^{|T|}$.  
We use $ X_{\rm{LP}} $ to denote the linear relaxation of {set} $ X $ obtained by relaxing the integer variables to continuous variables.
We assume that $ T = \{1, \ldots, |T|\} \neq \varnothing $, $ 0 < b_1 \leq \cdots \leq b_{|T|} $, and $ a_q > 0$ for all $ q\in Q $.
Without loss of generality, we assume $a^{\top} \boldsymbol e - c >0$ since otherwise the {capacity constraint} in the unsplittable flow arc-set $X$ is redundant.

\section{Separation problem for the unsplittable flow arc-set polyhedron}\label{sect:separationproblem}
In this section, we first study the polyhedral properties of the {unsplittable} flow arc-set polyhedron $P$. 
Then we formulate the separation problem over polyhedron $ P $ as an optimization problem and prove that there exists an optimal solution which corresponds to a facet-defining inequality of polyhedron $P$.

\subsection{Characteristics of the {unsplittable} flow arc-set polyhedron}

We first note that polyhedron $ P $ is full dimensional.
\begin{proposition}\label{dimension}
	The dimension of polyhedron $P$ is $|Q|+|T|$.
\end{proposition}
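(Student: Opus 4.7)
The plan is to prove the lower bound $\dim(P)\ge |Q|+|T|$ by exhibiting $|Q|+|T|+1$ affinely independent points in $X$; the matching upper bound is automatic since $P\subseteq\mathbb{R}^{|Q|+|T|}$.

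The key idea is to choose a ``slack'' base point that satisfies the capacity constraint with enough room that we can freely perturb it in every coordinate direction and still remain in $X$. Concretely, since $b_1>0$, I pick an integer $\bar y_1$ large enough that
\[
b_1\bar y_1+c\ \ge\ \sum_{q\in Q}a_q,
\]
and set $\bar y_t=0$ for $t\ge 2$. Call this vector $\bar y\in\mathbb{Z}_+^{|T|}$. By construction, for any $x\in\{0,1\}^{|Q|}$ the point $(x,\bar y)$ lies in $X$, and for any $y\ge\bar y$ componentwise (with $y\in\mathbb{Z}_+^{|T|}$) we also have $(x,y)\in X$.

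Now I would list the $|Q|+|T|+1$ candidate points:
\begin{itemize}
\item the base point $(0,\bar y)$,
\item for each $i\in Q$, the point $(\boldsymbol{e^i},\bar y)$,
\item for each $j\in T$, the point $(0,\bar y+\boldsymbol{f^j})$.
\end{itemize}
All lie in $X$ by the slack property above. To check affine independence, subtract the base point $(0,\bar y)$ from the remaining $|Q|+|T|$ points; the differences are $(\boldsymbol{e^i},0)$ for $i\in Q$ and $(0,\boldsymbol{f^j})$ for $j\in T$, which are precisely the standard basis vectors of $\mathbb{R}^{|Q|+|T|}$ and hence linearly independent. This gives $\dim(P)\ge|Q|+|T|$, and combined with the trivial upper bound we conclude $\dim(P)=|Q|+|T|$.

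There is no real obstacle here: the only thing to verify is that such a $\bar y$ exists, which is immediate from $b_1>0$ and $\bar y_1\in\mathbb{Z}_+$ being unbounded above (note that we do not need to use the assumption $a^{\top}\boldsymbol e-c>0$ anywhere, and the sign of $c$ does not matter). The proof is essentially a one-paragraph affine-independence check, and the dimension being full reflects the fact that no equality is implied by the single inequality defining $X$ together with the integrality/box constraints.
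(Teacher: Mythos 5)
Your proof is correct: the slack point $\bar y$ exists because $b_1>0$, all $|Q|+|T|+1$ points lie in $X$, and the differences from the base point are exactly the standard basis vectors, so $P$ is full dimensional. The paper states this proposition without proof as a straightforward observation, and your argument is precisely the standard one it implicitly relies on; nothing is missing.
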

Next, the following characterizations of some vertices and extreme rays of polyhedron $ P $ are straightforward.
\begin{proposition}\label{extremeray}
	The extreme rays of polyhedron $P$ are $(\boldsymbol{0}, \boldsymbol{f^1}),\ldots,(\boldsymbol{0},\boldsymbol{f^{|T|}})$.
\end{proposition}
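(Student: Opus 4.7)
The plan is to identify the recession cone of the polyhedron $P$ and then read off its extreme rays. Since $P=\conv(X)$ is obtained from a rational mixed-integer linear system, it is itself a polyhedron (Meyer's theorem), so it decomposes as a convex hull of finitely many vertices plus a conic hull of finitely many extreme rays, and these extreme rays are precisely the extreme rays of the recession cone of $P$.

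First I would show that any recession direction $(u,v)$ of $P$ must satisfy $u=\boldsymbol{0}$. This is because every $(x,y)\in X$ has $x\in\{0,1\}^{|Q|}$, so the projection of $P$ onto the $x$-coordinates lies in the bounded set $[0,1]^{|Q|}$, ruling out any nonzero $u$-component. Next I would show that $(\boldsymbol{0},v)$ with $v\ge\boldsymbol{0}$ is a recession direction: for any integer point $(x^0,y^0)\in X$ and any nonnegative integer vector $v$, the point $(x^0,y^0+v)$ lies in $X$ since the capacity constraint $a^\top x \le b^\top y + c$ only becomes slacker when $y$ grows componentwise. A standard scaling argument then extends this to arbitrary real $v\ge\boldsymbol{0}$, using that $P$ is the convex hull of $X$. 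Conversely, if some component $v_t<0$, then translating any point of $P$ far enough in direction $(\boldsymbol{0},v)$ forces $y_t<0$, which contradicts $y_t\ge 0$.

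Putting these two halves together, the recession cone of $P$ equals $\{\boldsymbol{0}\}\times\mathbb{R}_+^{|T|}$. The extreme rays of this orthant are exactly the coordinate directions $(\boldsymbol{0},\boldsymbol{f^j})$ for $j=1,\dots,|T|$, and consequently these are the extreme rays of $P$.

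The argument is mostly routine; the only mild subtlety is passing from the recession directions of the discrete set $X$ (where the shift $y\mapsto y+v$ is natural for integer $v$) to recession directions of the continuous object $P=\conv(X)$ for arbitrary real $v\ge\boldsymbol{0}$. I would handle this by invoking the polyhedrality of $P$ and the fact that its recession cone is determined by the linear inequalities describing $P$, or equivalently by scaling an integer recession direction and taking limits of convex combinations inside $P$.
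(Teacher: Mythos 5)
Your proof is correct. The paper itself offers no proof of this proposition (it is declared ``straightforward'' immediately before the statement), so there is nothing to compare against; the recession-cone route you take is the natural one, and every step checks out: $P=\conv(X)$ is a rational polyhedron by Meyer's theorem, its $x$-projection is bounded so any recession direction has $u=\boldsymbol{0}$, each $(\boldsymbol{0},\boldsymbol{f^t})$ is a recession direction because shifting $y$ by a nonnegative integer vector preserves membership in $X$ (using $b_t>0$) and the recession cone is a convex cone, the constraints $y_t\geq 0$ exclude directions with a negative component, and since $P\subseteq[0,1]^{|Q|}\times\mathbb{R}_+^{|T|}$ is pointed its extreme rays are exactly the extreme rays of the recession cone $\{\boldsymbol{0}\}\times\mathbb{R}_+^{|T|}$, i.e.\ the coordinate directions. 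The only point worth making explicit is that $X\neq\varnothing$ (e.g.\ $(\boldsymbol{0},\rho_1(\boldsymbol{0})\boldsymbol{f^1})\in X$), so that $P$ and its recession cone are well defined.
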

\begin{proposition}\label{vertex}
	The point $(x,\rho_t(x)\boldsymbol{f^t})$ is a vertex  of polyhedron $P$ for each 
	$x\in \{0,1\}^{|Q|}$ and $ t \in T $, where 
	\begin{equation}
		\label{rhotxdef}
		\rho_t(x)= \max\left \{\left \lceil (a^{\top}x-c)/b_t\right\rceil,0\right \}.
	\end{equation}
\end{proposition}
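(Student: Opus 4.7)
The plan is to verify Proposition \ref{vertex} in two stages: first confirm $(x, \rho_t(x)\boldsymbol f^t)$ lies in $X$, hence in $P$, and second show that this point is extreme in $P = \conv(X)$. Feasibility is a matter of unfolding the definition of $\rho_t(x)$, while extremality exploits the integrality built into $X$.

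For membership in $X$, note that $x \in \{0,1\}^{|Q|}$ and $\rho_t(x)\boldsymbol f^t \in \mathbb{Z}_+^{|T|}$ are immediate. To check the capacity constraint $a^\top x \leq b_t \rho_t(x) + c$, I would split on the sign of $a^\top x - c$: if $a^\top x \leq c$ then $\rho_t(x) = 0$ and the inequality is trivial; otherwise $\rho_t(x) = \lceil (a^\top x - c)/b_t \rceil$ and the ceiling gives $b_t \rho_t(x) \geq a^\top x - c$.

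For extremality, I would write $p := (x, \rho_t(x)\boldsymbol f^t)$ as a generic finite convex combination $p = \sum_k \mu_k (x^k, y^k)$ with $(x^k, y^k) \in X$, $\mu_k > 0$, and $\sum_k \mu_k = 1$, then deduce that every $(x^k, y^k)$ equals $p$. The argument proceeds coordinate by coordinate. Each equation $x_q = \sum_k \mu_k x^k_q$ involves values in $\{0,1\}$, which forces $x^k_q = x_q$ for all $k$. For each $s \neq t$, the equation $0 = \sum_k \mu_k y^k_s$ with nonnegative summands forces $y^k_s = 0$. Finally, from $(x^k, y^k) \in X$ together with $y^k_r = 0$ for $r \neq t$ and $y^k_t \in \mathbb{Z}_+$, the capacity constraint forces $y^k_t \geq \rho_t(x)$; since the convex combination of these values equals $\rho_t(x)$, equality forces $y^k_t = \rho_t(x)$ for every $k$. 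Hence every $(x^k, y^k) = p$, so $p$ is a vertex.

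The main subtlety, rather than a genuine obstacle, is handling the two branches of the $\max$ in the definition of $\rho_t(x)$ uniformly in the last step. The clean observation that any nonnegative integer $\eta$ with $b_t \eta \geq a^\top x - c$ automatically satisfies $\eta \geq \rho_t(x)$, whether $a^\top x \leq c$ or $a^\top x > c$, unifies both cases and reduces the argument to the coordinate-wise checks described above.
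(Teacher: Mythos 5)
The paper states Proposition~\ref{vertex} without proof, remarking only that the characterization is ``straightforward,'' so there is no authorial argument to compare against; your write-up is a correct and complete version of exactly the verification the authors leave to the reader. The coordinate-wise forcing argument (binary $x$-coordinates, zero $y_s$-coordinates for $s\neq t$, and the integrality plus capacity bound pinning $y^k_t=\rho_t(x)$) is sound, and the one step you leave implicit --- that ruling out nontrivial convex combinations of points of $X$ suffices to rule out nontrivial convex combinations of points of $P=\conv(X)$ --- is the standard reduction and holds here since every point of $\conv(X)$ is a finite convex combination of points of $X$.
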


{The initial constraints $x_q\geq 0$, $x_q\leq 1$, $y_t\geq 0$, and $a^{\top}x\leq b^{\top}y +c$ are called trivial inequalities of polyhedron $P$.
We now present a necessary condition to guarantee the nontrivial inequality 
 \[\alpha^{\top}x  \leq \beta^{\top}y + \gamma\label{facet}\] 
to be facet-defining for polyhedron $P$.}

\begin{proposition}
	\label{facettheorem}
	Let \eqref{facet} be a nontrivial facet-defining inequality of polyhedron $P$. Then 
	\begin{enumerate}
		\item [\rm{(i)}]$\alpha_q\geq 0$ and $\alpha_q\leq \lceil a_q/b_t\rceil\beta_t$ for each $ q\in Q$ and $  t\in T${\rm{;}}
		\item [\rm{(ii)}]$0< \beta_t\leq \lceil b_t/b_k\rceil\beta_k$ for {each} $t,k\in T$ with $ t\neq k ${\rm{;}}
		\item [\rm{(iii)}]$\gamma\geq \begin{cases}
		0, & \text{if}~c\geq 0{\rm{;}}\\
		-\lceil -c/b_t\rceil \beta_t,~\forall~t\in T, & \text{otherwise}.
		\end{cases}$
	\end{enumerate}\end{proposition}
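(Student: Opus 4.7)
My plan is to combine three ingredients: the validity of $\alpha^{\top}x\leq\beta^{\top}y+\gamma$ at every point of $P$, the explicit description of the extreme rays and of a family of vertices of $P$ coming from Propositions \ref{extremeray} and \ref{vertex}, and the fact that two distinct facets of the full-dimensional polyhedron $P$ cannot coincide—so any facet contained in another facet is equal to it.

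Part (iii) is essentially a one-line substitution. When $c\geq 0$, the point $(\boldsymbol{0},\boldsymbol{0})$ lies in $X\subseteq P$ and substituting into \eqref{facet} yields $\gamma\geq 0$; when $c<0$, Proposition \ref{vertex} places $(\boldsymbol{0},\lceil -c/b_t\rceil\boldsymbol{f^t})$ in $X$ for every $t\in T$, and substitution delivers $\gamma\geq -\lceil -c/b_t\rceil\beta_t$. For part (ii), the extreme ray $(\boldsymbol{0},\boldsymbol{f^t})$ forces $\beta_t\geq 0$, since otherwise the right-hand side would go to $-\infty$ along the ray while the left-hand side is fixed. To derive $\beta_t\leq\lceil b_t/b_k\rceil\beta_k$, I locate a vertex $(x,y)$ of the facet $F=\{(x,y)\in P:\alpha^{\top}x=\beta^{\top}y+\gamma\}$ with $y_t\geq 1$; the swap $(x,y-\boldsymbol{f^t}+\lceil b_t/b_k\rceil\boldsymbol{f^k})$ remains in $X$ (capacity only grows), and comparing validity at the swapped point with the facet equation at $(x,y)$ yields the bound. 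Strict positivity $\beta_t>0$ then falls out by contradiction: if $\beta_t=0$, the symmetric bound forces $\beta_k\leq\lceil b_k/b_t\rceil\beta_t=0$ for every $k$, so $\beta=\boldsymbol{0}$, and the residual inequality $\alpha^{\top}x\leq\gamma$ is facet-defining only when it coincides with a trivial bound $x_q\leq 1$, contradicting nontriviality.

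Part (i) follows the same template applied to single $x$-flips. For $\alpha_q\geq 0$ I take a vertex $(x,y)$ of $F$ with $x_q=1$: then $(x-\boldsymbol{e^q},y)\in X$ and validity there, combined with the facet equation at $(x,y)$, forces $\alpha_q\geq 0$. The symmetric construction at a vertex with $x_q=0$, using $(x+\boldsymbol{e^q},y+\lceil a_q/b_t\rceil\boldsymbol{f^t})\in X$, gives $\alpha_q\leq\lceil a_q/b_t\rceil\beta_t$.

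The main obstacle, and the step I would treat most carefully, is the recurring existence claim that $F$ contains a vertex with the prescribed coordinate ($x_q=1$, $x_q=0$, or $y_t\geq 1$). The argument is uniform: if every vertex of $F$ had, say, $x_q=0$, then, since the recession directions $\boldsymbol{f^1},\ldots,\boldsymbol{f^{|T|}}$ leave $x$ unchanged, all of $F$ would lie in $\{x_q=0\}$; because $P\cap\{x_q=0\}$ is itself a facet of $P$ and $\dim F=|Q|+|T|-1$, the two facets would coincide, contradicting nontriviality of \eqref{facet}. The same reasoning, applied to the trivial facets $\{x_q=1\}$ and $\{y_t=0\}$ (using in the latter case that $\beta_t>0$ rules out $\boldsymbol{f^t}$ as a recession direction of $F$, so $y_t$ is attained at vertices), dispatches the remaining variants and closes the proof.
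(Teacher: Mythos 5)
Your proposal is correct and follows essentially the same route as the paper: the same point substitutions for (iii), the same ray and swap arguments for (ii), and the same single-coordinate flips for (i), with the existence of points of $F$ having $x_q=1$, $x_q=0$, or $y_t\geq 1$ justified (as the paper does implicitly) by the fact that a nontrivial facet cannot be contained in a trivial one. The only cosmetic difference is the final step of $\beta_t>0$, where you argue that a pure-$x$ inequality can only define a facet if it is a trivial bound, while the paper substitutes the point $(\boldsymbol e,\lceil (a^{\top}\boldsymbol e-c)/b_t\rceil\boldsymbol{f^t})$ to show $\alpha^{\top}\boldsymbol e\leq\gamma$ and concludes domination by the bound constraints; both are valid.
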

\begin{proof}
	For notation convenience, denote $F = P \cap \{(x,y) :  \alpha^{\top}x =\beta^{\top}y+ \gamma\}$.
	
	(i) For each $q\in Q$, since $F$ is a nontrivial facet of polyhedron $P$ and 
	inequality \eqref{facet} differs from $x_q\geq 0$, there exists {a point} $(x^{(1)}, y^{(1)})\in F$ with $x^{(1)}_q=1$ and
	\[\alpha^{\top}x^{(1)} = 
	\beta^{\top}y^{(1)} + \gamma.\label{eq:0}\]
	Since the coefficient $ a_q > 0 $, we have $(x^{(1)}-\boldsymbol  
	{e^{q}},y^{(1)})\in P$, and hence point {$(x^{(1)}-\boldsymbol{e^{q}},y^{(1)})$  satisfies  the valid inequality \eqref{facet}, i.e.,} 
	\[\alpha^{\top}x^{(1)} - \alpha_q \leq  
	\beta^{\top}y^{(1)} + \gamma.\label{eq:1}\]  
	Subtracting \eqref{eq:0} from \eqref{eq:1}, we obtain $\alpha_q\geq 0$.
	On the other hand, as inequality \eqref{facet} differs from $x_q\leq 1$, there exists a point $(x^{(2)}, y^{(2)})\in F$ such that $x^{(2)}_q=0$.
	This, combined with the fact that $(x^{(2)}+ \boldsymbol{e^q},y^{(2)}+ \lceil {a_q}/{b_t}\rceil \boldsymbol{f^t})\in P$ for each $ t \in T $, indicates that $\alpha_q \leq \d\lceil a_q/b_t\rceil\beta_{t}$. 
	
	(ii) For each $t\in T$, since inequality \eqref{facet} differs from $y_t\geq 0$, there exists {a point} $(x^{(3)},y^{(3)})\in F$ such that $y^{(3)}_t\geq 1$.
	Then for all $k\in T\backslash\{ t\}$, we have $(x^{(3)}, y^{(3)}-\boldsymbol{f^t}+ \lceil {b_t}/{b_{k}}\rceil \boldsymbol{f^{k}})\in P$,
	which further implies that $\beta_t\leq \lceil b_t/b_k\rceil\beta_k$.	 
	For each $ t \in T $, as $(\boldsymbol 0,\boldsymbol{f^t})$ is an extreme ray of polyhedron $P$ (see Proposition \ref{extremeray}), we have $\beta_t\geq 0$. 
	If $\beta_\tau = 0$ for some $ \tau \in T $, then $0\leq \beta_t \leq \lceil b_t/b_\tau\rceil\beta_\tau=0$ for all $t\in T\backslash \{\tau\} $. 
	Hence, inequality \eqref{facet} reduces to $ \alpha^{\top}x  \leq \gamma $.
	This, together with the fact that $ (\boldsymbol e, \lceil (a^{\top}\boldsymbol e-c)/b_t\rceil \boldsymbol{f^t})\in P$, implies $\alpha^{\top}\boldsymbol e\leq \gamma$. 
	However, this means that inequality \eqref{facet} is dominated by the bound constraints and, thus it cannot define a facet of {polyhedron} $ P $. Therefore, $ \beta_t > 0 $ for all $ t \in T $.
	
	(iii) If $c\geq 0$, since $(\boldsymbol 0,\boldsymbol 0)\in P$, then $\gamma \geq 0$; 
	otherwise, as $(\boldsymbol 0,\lceil -c/b_t\rceil \boldsymbol{f^t} )\in P$, it follows that $\gamma\geq -\lceil -c/b_t\rceil\beta_t$ for all $t \in T$.\qed
\end{proof}

\subsection{Separation problem}
Given a point $(\x,\y)\in \mathbb{R}^{|Q|} \times \mathbb{R}^{|T|}$, the 
separation problem of polyhedron $ P $ is to construct a hyperplane (induced by 
inequality \eqref{facet}) separating $(\x,\y)$ from $P$ strictly, i.e., 
$$\alpha^{\top}x\leq \beta^{\top}y+\gamma ,~\forall~(x,y) \in P,$$
and 
$$\alpha^{\top}\x > \beta^{\top}\y+ \gamma, $$
or prove that no such hyperplane exists, i.e., point $(\x,\y) \in P $. 
The separation problem is trivial to be solved if point $(\x,\y)\notin X_{\rm{LP}}$ since one of the inequalities $ 0 \leq x_q \leq 1 $, $ q \in Q $, $  y_t \geq 0$, $ t \in T $, and $  \sum_{q\in 
	Q}a_qx_q \leq \sum_{t\in T}b_ty_t+ c $ must be violated by this point. Hence, we 
assume that point $(\x,\y) \in X_{\rm{LP}} $ throughout this paper.
Solving the separation problem is equivalent to solving
\begin{equation}
\begin{aligned}
\label{seppro1}
v = \d\max_{\alpha, \beta,\gamma}~& \x^{\top}\alpha -\y^{\top}\beta -\gamma,\\
\st~&  x^{\top}\alpha - y^{\top}\beta -\gamma \leq 0, ~\forall~(x,y) \in P,\\
~& (\alpha,\beta,\gamma)\in S,
\end{aligned}
\end{equation}
where $S$ is a closed convex set which guarantees that problem \eqref{seppro1} is bounded. 
If $v\leq 0$, we prove $(\x,\y) \in P$; otherwise, we find the hyperplane $ \alpha^{\top}x = \beta^{\top}y+\gamma $ separating $(\x,\y)$ from $P$ strictly. 
Let $(x^1,y^1),\ldots,(x^u, y^u)$, $ u \in \mathbb{Z}_+ $, be the vertices of polyhedron $P$. 
From the well-known Minkowski-Weyl theorem \cite{Minkowski,Weyl} and the description of the extreme rays of polyhedron $P$ in Proposition \ref{extremeray}, problem \eqref{seppro1} can be reduced to
\begin{equation}
\label{seppro2}
\begin{aligned}
v = \d\max_{\alpha, \beta,\gamma}~& \x^{\top}\alpha -\y^{\top}\beta -\gamma,\\
\st~&  (x^k)^{\top}\alpha - (y^k)^{\top}\beta -\gamma \leq 0,~k = 1,\ldots,u,\\
~& \beta_t\geq 0,~\forall~t\in T,~ (\alpha,\beta,\gamma)\in S.
\end{aligned}
\end{equation}
Notice that from Proposition \ref{facettheorem}, we have $ \beta_t > 0 $ for all $ t \in T $ in the nontrivial facet-defining inequality \eqref{facet} of {polyhedron} $P$. Hence, here we consider a special choice of $S$:  
\[ S= \{(\alpha,\beta,\gamma): \beta_1 = 1\},\label{normalset}\]
where $ 1 \in T $.
%
Based on this selection, problem \eqref{seppro2} further reduces to 
\begin{equation}
\begin{aligned}
\d\max_{\alpha,\beta,\gamma}~& \x^{\top}\alpha -\y^{\top}\beta - \gamma,\\
\st~& (x^k)^{\top}\alpha - (y^k)^{\top}\beta - \gamma\leq 0,~k = 1,\ldots,u,\\
&\beta_1 = 1,~\beta_t\geq 0,~\forall~	t\in T\backslash \{1\}.
\end{aligned}\label{seppromain}
\end{equation}
Below we shall show that problem \eqref{seppromain} is feasible and bounded.
\begin{lemma}
	Given a point $ (\x, \y) \in X_{\rm{LP}}$, problem \eqref{seppromain} is feasible and bounded. \label{lemma1}
\end{lemma}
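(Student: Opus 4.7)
Because \eqref{seppromain} is a linear program, I would split the claim into (a) exhibiting a feasible solution and (b) showing that the objective cannot increase along any direction of recession. Both parts should be elementary provided we exploit Propositions \ref{extremeray} and \ref{vertex}.

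For (a), the candidate I would try is $(\alpha,\beta,\gamma) = (\boldsymbol{0},\boldsymbol{f^1},0)$. The normalization and sign constraints $\beta_1 = 1$ and $\beta_t = 0 \ge 0$ for $t \in T\setminus\{1\}$ hold by construction, and for every vertex $(x^k,y^k)$ of polyhedron $P$ one has $(x^k)^{\top}\alpha - (y^k)^{\top}\beta - \gamma = -y^k_1 \le 0$, since $y^k_1 \in \mathbb{Z}_+$ by \eqref{domain}. So feasibility should follow at once.

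For (b), I would take a generic recession direction $(\tilde\alpha,\tilde\beta,\tilde\gamma)$ of the feasible set: it must satisfy $\tilde\beta_1 = 0$, $\tilde\beta_t \ge 0$ for $t\neq 1$, and $(x^k)^{\top}\tilde\alpha - (y^k)^{\top}\tilde\beta - \tilde\gamma \le 0$ at every vertex of $P$. Together with $\tilde\beta \ge \boldsymbol{0}$, which handles the extreme rays from Proposition \ref{extremeray}, this is precisely the statement that $\tilde\alpha^{\top}x \le \tilde\beta^{\top}y + \tilde\gamma$ is a valid inequality on all of polyhedron $P$. The key step is then to invoke Proposition \ref{vertex}: for every $x\in\{0,1\}^{|Q|}$ the point $(x,\rho_1(x)\boldsymbol{f^1})$ lies in $P$, so evaluating the valid inequality there together with $\tilde\beta_1 = 0$ yields $\tilde\alpha^{\top}x \le \rho_1(x)\tilde\beta_1 + \tilde\gamma = \tilde\gamma$.

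Since $(\x,\y)\in X_{\rm{LP}}$ implies $\x \in [0,1]^{|Q|}$, the vector $\x$ is a convex combination of points in $\{0,1\}^{|Q|}$, and averaging the previous bound over such a combination gives $\tilde\alpha^{\top}\x \le \tilde\gamma$. Combined with $\y \ge \boldsymbol{0}$ and $\tilde\beta \ge \boldsymbol{0}$, this produces $\x^{\top}\tilde\alpha - \y^{\top}\tilde\beta - \tilde\gamma \le \tilde\gamma - 0 - \tilde\gamma = 0$, so the objective does not increase along any recession direction and the LP is bounded. The only slightly delicate point is recognizing that the normalization $\beta_1 = 1$ kills every scaling degree of freedom (equivalently, $\tilde\beta_1 = 0$ on recession directions); once this is noticed, Proposition \ref{vertex} immediately supplies the vertices needed to control $\tilde\alpha^{\top}x$ by $\tilde\gamma$ alone, and I do not anticipate any further technical obstacle.
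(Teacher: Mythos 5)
Your proof is correct and follows essentially the same route as the paper: the same feasible point $(\boldsymbol{0},\boldsymbol{f^1},0)$, and boundedness via recession directions $(\tilde\alpha,\tilde\beta,\tilde\gamma)$ with $\tilde\beta_1=0$, $\tilde\beta_t\ge 0$, controlled by the vertices $(x,\rho_1(x)\boldsymbol{f^1})$ from Proposition \ref{vertex}. The only cosmetic difference is that the paper argues by contradiction and evaluates at the single maximizing binary point $\hat{x}$ (with $\hat{x}_q=1$ iff $\Delta\alpha_q\ge 0$), whereas you bound $\tilde\alpha^{\top}x\le\tilde\gamma$ at every binary $x$ and average over a convex combination representing $\x$; both yield the same estimate.
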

\begin{proof}
	We only need to show that problem \eqref{seppromain} is bounded since the point $(\alpha,\beta,\gamma)=(\boldsymbol0, \boldsymbol{f^1}, 0)$ is a feasible solution. 
	We shall use the contradiction argument to prove the boundedness of problem \eqref{seppromain}.
	Suppose that problem \eqref{seppromain} is unbounded.
	Then there exists a vector 
	$(\Delta\alpha, \Delta\beta, \Delta\gamma) 
	\in 
	\mathbb{R}^{|Q|} \times \mathbb{R}^{|T|} \times \mathbb{R}$ such that $ \Delta\beta_1=0  $, $\Delta \beta_t \geq 0$ for 
	all $ t \in T\backslash \{1\} $,
	\[(x^k)^{\top}\Delta\alpha - (y^k)^{\top}\Delta\beta- \Delta\gamma \leq 0,~\text{for all}~k = 1,\ldots,u ,\label{p2}\]
	and
	\[	\x^{\top}\Delta\alpha - \y^{\top}\Delta\beta - \Delta \gamma > 0. \label{p1}\]
	Combining \eqref{p2} and \eqref{p1}, we have
	\[(\x-x^k)^{\top}\Delta\alpha > (\y-y^k)^{\top}\Delta\beta,~\text{for all}~k=1,\ldots,u.\label{ineq}\]
	Define a point $ \hat{x} \in \mathbb{R}^{|Q|} $ such that
	$$\hat{x}_q = \begin{cases}
	1,& \text{if}~\Delta \alpha_q \geq 0;\\
	0,& \text{if}~\Delta \alpha_q < 0.\\
	\end{cases}$$
	From Proposition \ref{vertex}, point $(\hat{x},\rho_1(\hat{x})\boldsymbol{f^1})$ is a vertex of {polyhedron} $P$ where $\rho_1(\hat{x})$ is defined in \eqref{rhotxdef}.
	By substituting this point into \eqref{ineq} and using $0 \leq  \x_q \leq 1$ for all $ q \in Q$, $ \y_t \geq 0$ for all $ t \in T $ (as $ (\x, \y) \in X_{\rm{LP}} $), $\Delta \beta_1 = 0$, and $\Delta \beta_t \geq 0$ for all $t \in T \backslash\{1\}$, we have that 
	\begin{equation*}
	\begin{aligned}
	\d 0 \geq \sum_{q\in Q,\,\Delta\alpha_q\geq 0}(\x_q-1)\Delta\alpha_q + \sum_{q\in Q,\,\Delta\alpha_q< 0}(\x_q-0)\Delta\alpha_q >& \\ 
	 \sum_{t\in T\backslash\{1\}}(\y_t-0)\Delta\beta_t + (\y_1 - \rho_1(\hat{x})) \Delta\beta_1  \geq 0,
	\end{aligned}
	\end{equation*}
	%
	which leads to a contradiction. Thus problem \eqref{seppromain} is bounded.
\qed\end{proof}

Lemma \ref{lemma1} guarantees that problem \eqref{seppromain} contains at least one optimal solution.
Next, we shall show that using the simplex method to solve problem \eqref{seppromain}, we will obtain a facet-defining inequality of {polyhedron} $ P $.

\begin{theorem}
	 Given a point $ (\bar{x}, \bar{y}) \in X_{\rm{LP}}$, the
	 basic optimal 
	 solution $ (\alpha, 
	 \beta, \gamma) $ of the linear programming problem \eqref{seppromain} 
	 defines a facet-defining inequality \eqref{facet} of 
	 polyhedron $P$.
	\label{solisfacet}
\end{theorem}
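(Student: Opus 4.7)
My plan is to show that the face $F:=P\cap\{(x,y):\alpha^\top x=\beta^\top y+\gamma\}$ cut out by the inequality \eqref{facet} coming from a basic optimal solution $(\alpha,\beta,\gamma)$ has dimension exactly $|Q|+|T|-1$; combined with $\dim P=|Q|+|T|$ from Proposition \ref{dimension}, this is the facet property. The first step is to unpack ``basic''. After using the equation $\beta_1=1$ to fix one coordinate, problem \eqref{seppromain} has $|Q|+|T|$ free variables, so a basic feasible solution has $|Q|+|T|$ tight inequalities with linearly independent gradients. These consist of sign constraints $\beta_t=0$ for $t$ in some $N\subseteq T\setminus\{1\}$, together with active vertex inequalities $(x^k)^\top\alpha-(y^k)^\top\beta-\gamma=0$ for $k$ in an index set $K$, with $|K|+|N|=|Q|+|T|$. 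Setting $T':=T\setminus(N\cup\{1\})$ and eliminating the $\beta_t$-coordinates for $t\in N$, linear independence of the active gradients becomes the statement that the $|K|=|Q|+|T|-|N|$ vectors $(x^k,-y^k|_{T'},-1)$, $k\in K$, form a basis of $\mathbb{R}^{|Q|+|T|-|N|}$.

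Next I would produce $|Q|+|T|$ explicit points of $F$: the $|K|$ vertices $(x^k,y^k)$, $k\in K$, which lie in $F$ by construction, together with the translates $(x^{k_0},y^{k_0}+\boldsymbol{f^t})$, $t\in N$, where $k_0\in K$ is fixed. Each translate is in $F$ because $\beta_t=0$ preserves the defining equation, and in $P$ because $(\boldsymbol 0,\boldsymbol{f^t})$ is an extreme ray of $P$ by Proposition \ref{extremeray}. To show these $|Q|+|T|$ points are affinely independent, assume a vanishing linear combination of the associated $|Q|+|T|-1$ differences,
\[
\sum_{k\in K\setminus\{k_0\}}\mu_k(x^k-x^{k_0},y^k-y^{k_0})+\sum_{t\in N}\nu_t(\boldsymbol 0,\boldsymbol{f^t})=0,
\]
and repackage the $x$- and $y_{T'}$-coordinates of this identity as the first two blocks of
\[
\sum_{k\in K\setminus\{k_0\}}\mu_k\bigl(x^k,-y^k|_{T'},-1\bigr)-\Bigl(\sum_{k\in K\setminus\{k_0\}}\mu_k\Bigr)\bigl(x^{k_0},-y^{k_0}|_{T'},-1\bigr)=0,
\]
whose final coordinate vanishes identically. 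The basis property from step one then forces every $\mu_k$ to vanish, and the $y_t$-equation for $t\in N$ collapses to $\nu_t=0$. Hence $\dim F\geq|Q|+|T|-1$; and since $\beta_1=1\neq 0$ rules out $(\alpha,\beta,\gamma)=(\boldsymbol 0,\boldsymbol 0,0)$ while $P$ is full-dimensional, $F$ is a proper face of $P$, giving the reverse inequality.

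The main obstacle I anticipate is the bookkeeping around the distinguished index $t=1$: the normalization $\beta_1=1$ removes the $\beta_1$-coordinate from the gradient-independence analysis, so the $y_1$-coordinates of the vertices never appear in the basis property of step one, even though they do show up as one coordinate block in the affine-independence identity. The resolution, as sketched, is that after packaging the coefficients $\mu_k$ via the ``$-1$'' slot coming from the $\gamma$ variable, the $y_1$-coordinate block of the identity becomes automatic once $\mu_k=0$ has been deduced from the $x$- and $y_{T'}$-blocks, so no extra equation is required.
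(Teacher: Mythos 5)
Your proposal is correct and follows essentially the same route as the paper's proof: both identify the $|Q|+|T|$ linearly independent active constraints of the basic optimal solution beyond $\beta_1=1$, split them into tight sign constraints $\beta_t=0$ and tight vertex constraints, and exhibit the corresponding vertices together with their translates along the extreme rays $(\boldsymbol 0,\boldsymbol{f^t})$ as $|Q|+|T|$ affinely independent points of the face. The only differences are cosmetic --- you deduce affine independence by projecting out the $\beta_t$-coordinates for $t\in N\cup\{1\}$ and invoking the resulting basis property, where the paper performs an elementary row operation on the active-constraint matrix --- and you additionally make explicit the (standard) argument that $\beta_1=1$ forces $F\neq P$, which the paper leaves implicit.
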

\begin{proof}
	Let $ (\alpha, \beta, \gamma) $ be a basic optimal solution of problem 
	\eqref{seppromain} and \eqref{facet} be the corresponding inequality. 
	For notation purpose, denote $h = |Q|+ |T|$. 
	From the linear programming theory, except the equality $ \beta_1 = 1 $, 
	there exist 
	another $ h $ active constraints at point $ 
		(\alpha, \beta, \gamma) $ such that the vectors of the coefficients of the constraints are linearly independent.
	Without loss of generality, we assume that these constraints are $ \beta_t = 0 $ for $t = \ell_1, \ldots, \ell_\tau $ ($\ell_i \in T\backslash\{1\}$, $i=1, \ldots, \tau $), and $ (x^k)^{\top}\alpha - (y^k)^{\top}\beta - \gamma= 0 $ for $ k = 1, \ldots, h - \tau $. 
	Then
	$$\begin{bmatrix}
	(x^{1})^{\top}& (y^{1})^{\top} & 1  \\
	\vdots &  \vdots &\vdots  \\
	(x^{h-\tau})^{\top} &(y^{h-\tau})^{\top} &1\\
	{\boldsymbol0}^\top &  (\boldsymbol{f^{\ell_1}})^{\top} & 0  \\
	\vdots &  \vdots &\vdots  \\
	{\boldsymbol0}^\top&  (\boldsymbol{f^{\ell_\tau}})^{\top} & 0  \\
	\end{bmatrix}\begin{bmatrix}
	\alpha\\
	-\beta\\
	 -\gamma
	\end{bmatrix}= \begin{bmatrix}
	0\\
	\\
	\vdots\\
	\\
	\\
	0\\
	\end{bmatrix},$$
	where the rank of the coefficient matrix is $h$. 
	By adding the first row to the last $\tau $ rows, we have 
	$$\begin{bmatrix}
	(x^{1})^{\top}& (y^{1})^{\top} & 1  \\
	\vdots &  \vdots &\vdots  \\
	(x^{h-\tau})^{\top} &(y^{h-\tau})^{\top} &1\\
	(x^{1})^{\top}&  (y^{1}+\boldsymbol{f^{\ell_1}})^{\top} & 1  \\
	\vdots &  \vdots &\vdots  \\
	(x^{1})^{\top}&  (y^{1}+\boldsymbol{f^{\ell_\tau}})^{\top} & 1  \\
	\end{bmatrix}\begin{bmatrix}
	\alpha\\
	-\beta\\
	-\gamma
	\end{bmatrix}= \begin{bmatrix}
	0\\
	\\
	\vdots\\
	\\
	\\
	0\\
	\end{bmatrix}.$$
	It follows immediately that the rank of the new coefficient matrix is also $ h $.
	Hence the points $(x^{1}, y^{1}), (x^{2}, y^{2}), \ldots, (x^{h-\tau}, y^{h-\tau})$, $(x^{1}, y^{1}+\boldsymbol{f^{\ell_1}}), \ldots, (x^{1}, y^{1}+\boldsymbol{f^{\ell_\tau}})$ are affinely independent. 
	Furthermore, these points satisfy \eqref{facet} at equality. This, together with the fact that the dimension of {polyhedron} $P$ is $h$ in Proposition \ref{dimension}, implies that \eqref{facet} is a facet-defining inequality of polyhedron $P$.
\qed\end{proof}

\section{A closed form of the separation problem: sufficient conditions}
\label{generalcond}

Given a point $ (\x, \y) \in X_{\rm{LP}} $, in this section, we consider some special cases for which a closed form of problem \eqref{seppromain} can be derived.
The analysis result of these special cases will be used as a preprocessing technique to reduce the computational time of solving problem \eqref{seppromain} for the general case. 
We first consider the case with a single facility, i.e., $|T|=1$, and then 
generalize it to the multifacility case, 
i.e., $|T|\geq 2$. 
All proofs of the propositions in this section are given in the appendix.

To simplify the notation, in what follows, we refer problem \eqref{seppromain} as the separation problem of polyhedron $ P $. We say inequality \eqref{facet} solves the separation problem \eqref{seppromain} {for polyhedron} $ P $ if point $ (\alpha, \beta, \gamma) $ is one of its optimal solutions. 
For notation purpose, denote
\[{r} =\max\left \{ \left\lceil -  c/b_1\right\rceil, 0\right \} \label{rdef}\]
and 
\[d \in  \argmax_q\left\{\x_q: q\in Q\right \}. \label{ddef}\]

\subsection{Single facility}

In this subsection, we consider the single facility case, i.e., $ T = \{1\} $. 
If $ Q = \varnothing $, problem \eqref{seppromain} is a single variable problem which can be trivially solved, and hence we assume $ Q \neq \varnothing $.
Throughout this subsection, we restrict to consider the case {where} (i) the capacity 
of one module of this facility is larger than or equal to the demand of each commodity; and (ii) 
if any commodity $q$ goes through this arc, we need to install one more  module of this facility on the arc. Mathematically, this can be written as the following two assumptions:
\begin{enumerate}
	\item [(i)]
	$ a_q \leq b_1  $ for all $ q \in Q $;
	\item [(ii)]	
	$ b_1r +  c < a_q $ for all $ q \in Q $.
\end{enumerate}
We next give a closed form of the separation problem \eqref{seppromain} under some conditions in Propositions \ref{propcase1} and \ref{propcase2}.
\begin{proposition}\label{propcase1}
	 Let $(\x,\y) \in X_{\rm{LP}}$. Suppose that $T =\{1\}$, {\rm(i)}, {\rm(ii)}, and 
	\[ \sum_{q \in Q}a_q\leq b_1(r +1) + c	\label{first}\]
	hold.
	The inequality $x_d\leq y_1-r$ solves the separation problem \eqref{seppromain}. Moreover, it defines a facet of polyhedron $ P $.
\end{proposition}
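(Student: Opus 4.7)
The plan is to establish Proposition \ref{propcase1} in three steps: (a) verify $x_d\leq y_1 - r$ is valid for $P$, (b) show it is facet-defining, and (c) prove that the tuple $(\alpha,\beta_1,\gamma)=(\boldsymbol{e^d},1,-r)$ attains the optimum of the separation problem \eqref{seppromain}.

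For validity I would case-split on $x_d\in\{0,1\}$. When $x_d=1$, assumption (ii) gives $a_d > b_1 r + c$, and the capacity constraint $b_1 y_1 + c \geq \sum_{q\in Q}a_q x_q \geq a_d$ forces $y_1\geq r+1$, so $y_1 - r \geq 1 = x_d$. When $x_d=0$, the chain $0\leq a^{\top}x\leq b_1 y_1 + c$ combined with $y_1\in\mathbb{Z}_+$ yields $y_1\geq r$ directly from the definition \eqref{rdef}. For the facet property I would exhibit the $|Q|+1$ points $(0,r\boldsymbol{f^1})$, $(\boldsymbol{e^d},(r+1)\boldsymbol{f^1})$, and $(\boldsymbol{e^d}+\boldsymbol{e^q},(r+1)\boldsymbol{f^1})$ for $q\in Q\setminus\{d\}$; membership in $X$ of the third family uses the hypothesis \eqref{first}, each point satisfies $x_d = y_1 - r$ at equality, and affine independence follows by subtracting the first from the others. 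Since $\dim P = |Q|+1$ by Proposition \ref{dimension}, a facet is defined (alternatively this follows from Theorem \ref{solisfacet} once step (c) is complete).

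For LP-optimality I would isolate only two vertex inequalities in \eqref{seppromain}: from $(0,r\boldsymbol{f^1})\in P$ one extracts $\gamma\geq -r$, and from $(\boldsymbol{e},(r+1)\boldsymbol{f^1})\in P$ (where membership uses \eqref{first}) one extracts $\sum_{q\in Q}\alpha_q \leq (r+1)+\gamma$. Combined with $\alpha_q\geq 0$ from Proposition \ref{facettheorem} and $\x_q\leq\x_d\leq 1$, the objective is bounded by
\[
\x^{\top}\alpha - \y_1 - \gamma \;\leq\; \x_d\sum_{q\in Q}\alpha_q - \y_1 - \gamma \;\leq\; \x_d(r+1) - \y_1 + (\x_d-1)\gamma \;\leq\; \x_d + r - \y_1,
\]
the last step using $\x_d\leq 1$ and $\gamma\geq -r$. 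The value $\x_d - \y_1 + r$ is attained by $(\boldsymbol{e^d},1,-r)$, whose feasibility against all remaining vertex constraints of \eqref{seppromain} is precisely the validity proved in step (a).

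The main obstacle is pinpointing which two vertex constraints dominate the separation LP, and recognizing that the hypothesis \eqref{first} is exactly what guarantees $(\boldsymbol{e},(r+1)\boldsymbol{f^1})\in P$, which in turn makes the crisp bound $\sum_q\alpha_q\leq (r+1)+\gamma$ available; without this hypothesis the closed form would break because the tight packing vertex would lie outside $P$.
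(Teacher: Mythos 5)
Your proposal is correct and follows essentially the same route as the paper: the paper reduces \eqref{seppromain} to the two dominant constraints $\gamma\geq -r$ and $\sum_{q\in Q}\alpha_q\leq (r+1)+\gamma$ (via the explicit description of the vertices of $P$ and Lemma~\ref{lemma2}/Theorem~\ref{solisfacet}) and then applies exactly your chain of bounds in Lemma~\ref{lpclosedform1}, and it certifies the facet with the same $|Q|+1$ affinely independent points. The only cosmetic difference is that you check validity of $x_d\leq y_1-r$ directly by a case split on $x_d$, whereas the paper gets it implicitly from the domination argument over the full vertex list.
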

\begin{example}
	Let $X_1 = 
	\left \{(x, y)\in\{0,1\}^4\times\mathbb{Z}_+: 
	11x_1+15x_2+24x_3+50x_4\leq 100y\right \}$ and  $(\x, \y) = 
	(0.3,0.5,0.9,0.1,0.38)$. 
	Since $ r  = 0$, $ c= 0 \leq a_q $ for $ q =1,2,3,4 $, and $ \sum_{q=1}^{4} a_q = 11+15+24+50 = 100 \leq 100= b_1$, by Proposition \ref{propcase1}, inequality $x_3\leq y$ is a solution of the separation problem \eqref{seppromain} which is violated by point $ (\x, \y) $.
\end{example}

The condition \eqref{first} in 
Proposition \ref{propcase1} requires a large $ b $. 
To see this, suppose $  c = 0$. Then $ r = 0 $ and the condition reduces to $ \sum_{q \in Q}  a_q \leq b_1 $, 
which means one module of the facility is enough to cover all the 
demands through this arc. 
Due to this, we derive a closed form under 
a condition with a smaller $b$ in the following.

\begin{proposition}\label{propcase2}
	Let $(\x,\y) \in X_{\rm{LP}}$. Suppose that $T =\{1\}$, {\rm(i)}, {\rm(ii)}, and
	\begin{equation}
	\label{third}
	\sum_{q \in Q}a_q  - a_{\q} \leq  b_1(r+1)+ c< \sum_{q \in Q}a_q, \ \forall 
	~ \q \in Q
	\end{equation}
	hold.
	Then we have the followings.
	\begin{enumerate}
		\item 
		[ {\rm(a)}]  If $ |Q| \leq 2$, the inequality  $\sum_{q\in Q}x_q\leq y_1-r$  solves the separation problem \eqref{seppromain}.
		\item 
		[{\rm(b)}] If $ |Q| \geq 3 $, one of the following three inequalities solves the separation problem  \eqref{seppromain}, respectively:
		$$\left\{\begin{aligned}
		&x_{d}\leq  y_1-r,\ && \!\!\!\!\!\text{if}\  \frac{\sum_{q\in Q}\x_q}{|Q|-1} \leq 
		\x_d;\\
		&\frac{1}{|Q|-1}\sum_{q \in Q} x_q\leq y_1 -r,\ && \!\!\!\!\!\text{if}\ \x_d < 
		\frac{\sum_{q\in Q}\x_q}{|Q|-1}\leq 1;\\
		&\sum_{q\in Q}x_q\leq  y_1-r+|Q|-2,\ &&\!\!\!\!\!\text{if}\ \frac{\sum_{q\in 
		Q}\x_q}{|Q|-1}> 1.
		\end{aligned} \right.$$ 
	\end{enumerate}
	Moreover, in both two cases, the inequalities define facets of polyhedron $ P $, respectively.
\end{proposition}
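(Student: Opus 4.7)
The plan is to establish each candidate inequality in (a) and (b) by showing three things: validity for polyhedron $P$, the facet-defining property, and optimality for the separation LP \eqref{seppromain} within the stated subregion of $(\x,\y)$.

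I would start by tabulating the value of $\rho_1(x)$ (Proposition \ref{vertex}) on the relevant vertices of $P$. Assumption (ii) gives $a_q > b_1 r + c$ for every $q\in Q$, so $\rho_1(x)\geq r+1$ whenever $x\neq \boldsymbol 0$; condition \eqref{third} gives $\rho_1(x)=r+1$ for every $x$ corresponding to a nonempty proper subset of $Q$; and combining \eqref{third} with (i) yields $\sum_{q\in Q}a_q\leq b_1(r+2)+c$, hence $\rho_1(\boldsymbol e)=r+2$. Validity of each candidate is then a direct check against these finitely many vertex constraints. For the facet property, I would exhibit $|Q|+1=\dim(P)$ affinely independent feasible points at which the candidate holds with equality; for the first candidate $x_d\leq y_1-r$ of (b), the collection $(\boldsymbol 0,r\boldsymbol{f^1})$, $(\boldsymbol{e^d},(r+1)\boldsymbol{f^1})$, and $(\boldsymbol{e^d}+\boldsymbol{e^q},(r+1)\boldsymbol{f^1})$ for $q\in Q\setminus\{d\}$ works (each point in the third family lies in $P$ by applying \eqref{third} with a third index $\q\in Q\setminus\{d,q\}$, available since $|Q|\geq 3$), and analogous affinely independent families serve the remaining two candidates.

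For optimality, I would reformulate \eqref{seppromain} using the above values of $\rho_1$: with $\beta_1=1$ and $|T|=1$, it becomes the linear program of maximizing $\x^{\top}\alpha-\y_1-\gamma$ subject to $\gamma\geq -r$, $\sum_{q\in Q'}\alpha_q\leq r+1+\gamma$ for every nonempty proper $Q'\subsetneq Q$, and $\sum_{q\in Q}\alpha_q\leq r+2+\gamma$. Since $\x\geq \boldsymbol 0$, one may take $\alpha\geq \boldsymbol 0$ without loss. Setting $w=1+r+\gamma\geq 1$ and writing $\alpha=m\boldsymbol e+\beta$ with $\beta\geq \boldsymbol 0$ and $\min_q\beta_q=0$, the proper-subset constraints all collapse to $\sum_q\beta_q\leq w-(|Q|-1)m$, and concentrating $\beta$ on coordinate $d$ (the largest $\x_q$) leaves the extremal shape $\alpha=m\boldsymbol e+t\boldsymbol{e^d}$. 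Substituting back, the objective becomes piecewise linear in $w$ with breakpoints at $w=1$ and $w=|Q|-1$; its maximum over $w\geq 1$, computed using $0\leq \x_q\leq 1$, is attained at one of $(\alpha,\gamma)\in\{(\boldsymbol{e^d},-r),\,((|Q|-1)^{-1}\boldsymbol e,-r),\,(\boldsymbol e,-r+|Q|-2)\}$, selected by exactly the three sign conditions on $\x_d-\sum_q\x_q/(|Q|-1)$ and $\sum_q\x_q/(|Q|-1)-1$ listed in (b). Case (a) follows by the same reduction; for $|Q|\leq 2$ the three subcases collapse to the single candidate $\sum_{q\in Q}x_q\leq y_1-r$.

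The hard part will be the case analysis for optimality: one must rule out ``intermediate'' supports $Q'$ with $2\leq |Q'|\leq |Q|-1$, which reduces to verifying that the inner maximum in $\alpha$ for fixed $w$ is indeed always attained at the two-layer shape $m\boldsymbol e+t\boldsymbol{e^d}$ (extra weight on any $q\neq d$ is dominated by raising $m$ by the same amount, since $\x_d=\max_q\x_q$), and to a careful comparison of the piecewise-linear univariate objective on its three linear pieces. The facet-defining claim can alternatively be re-derived from Theorem \ref{solisfacet} once the LP optimum has been identified.
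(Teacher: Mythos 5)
Your proposal follows the same skeleton as the paper's proof: use assumption (ii) and condition \eqref{third} to pin down the vertices of $P$ (namely $(\boldsymbol 0,r)$, $(x,r+1)$ for $x\neq\boldsymbol 0,\boldsymbol e$, and $(\boldsymbol e,r+2)$, equivalently your computation $\rho_1(x)=r+1$ on proper nonempty supports and $\rho_1(\boldsymbol e)=r+2$), reduce \eqref{seppromain} to a small LP whose nontrivial constraints are indexed by the sets $Q\setminus\{\q\}$ together with $\gamma\geq -r$ and $0\leq\alpha_q\leq 1$, identify the three candidate optima $(\boldsymbol{e^d},-r)$, $(\tfrac{1}{|Q|-1}\boldsymbol e,-r)$, $(\boldsymbol e,-r+|Q|-2)$, and certify the facet claims with exactly the same families of $|Q|+1$ affinely independent points. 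The only genuine divergence is how the reduced LP is solved: the paper's Lemma on problem \eqref{easycases2} enumerates which constraints can be active at a basic optimal solution (first branching on whether $\gamma=-r$ or $\gamma=\sum_{q\neq q'}\alpha_q-(r+1)$), whereas you reparametrize via $w=1+r+\gamma$ and $\alpha=m\boldsymbol e+\beta$ and reduce to a piecewise-linear univariate comparison of the two ``rates'' $\x_d$ and $\sum_q\x_q/(|Q|-1)$. That route is workable and arguably more transparent, but one intermediate claim is false as stated: for fixed intermediate $w$ the inner maximizer need not have the two-layer shape $m\boldsymbol e+t\boldsymbol{e^d}$ (e.g.\ for $|Q|=4$ and $w=2$ the point $(1,1,0,0)$ is a vertex of the inner feasible region and is not of that form). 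This does not sink the argument, because every such intermediate vertex with support $S$ contributes $\sum_{q\in S}\x_q-|S|$, which is dominated by the $w=1$ candidate $\x_d-1$ since $\x_q\leq 1$; but as written your ``extra weight on $q\neq d$ is dominated by raising $m$'' step does not cover these points, and you would need either this domination argument or the paper's active-constraint enumeration to close it. Your observation that the facet claims could instead be read off from Theorem \ref{solisfacet} is consistent with how the paper invokes that theorem in the reduction.
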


\begin{example}
	Let $X_2 = \left \{(x, y)\in\{0,1\}^4\times\mathbb{Z}_+: 
	11x_1+15x_2+24x_3+50x_4\leq 90y\right\}$. 
	It is easy to verify the conditions of Proposition \ref{propcase2} are satisfied. 
	Suppose $(\x, \y) = (0.4,0.5,0.4,0.4,0.47)$. By simple calculation, it 
	follows that $ d=2 $, $ \bar{x}_d = 0.5$, and $0.5 < \sum_{i=1}^{4}\x_i/(4-1) = {\frac{17}{30}} < 1$.
	From Proposition \ref{propcase2}, this gives us the inequality  $$\frac{1}{3}(x_1+x_2+x_3+x_4)\leq y,$$
	which cuts off point $ (\x, \y) $.
\end{example}

\subsection{Multifacility}
In this subsection, we consider the multifacility case, i.e., $ |T| \geq 2 $. 
Throughout this subsection, we restrict to consider the case with (i), (ii), and 
\begin{enumerate}
	\item [(iii)]	
	$ \sum_{q\in Q} a_q \leq b_t +  c$ for all $ t\in T\backslash\{1\}$.
\end{enumerate}
Assumption (iii) means that except the facility $1 \in T$, the capacity of one module of other facilities is large enough to carry out all commodities.

Given {a point} $ (\x, \y) \in X_{\rm{LP}} $, under assumption (iii), we observe that if $\sum_{t\in T\backslash\{1\}}\y_t\geq 1$, it follows that point $ (\x,\y) \in P $.
Indeed, since point $ (\bar{x}, \bar{y}) \in X_{\text{LP}} $, we have $ \boldsymbol{0} \leq \bar{x} \leq \boldsymbol{1} $. Then, using Proposition \ref{facettheorem}(i) and the fact that $ a_q > 0 $, in order to prove $ (\bar{x}, \bar{y}) \in P $, it is enough to show that point $ (\boldsymbol{e}, \bar{y}) \in P $.
	Similarly, by Proposition \ref{facettheorem}(ii) and $ b_t > 0 $ for all $ t \in T$, it suffices to show that $ (\boldsymbol{e}, \bar{y}) \in P $ with $ \sum_{t \in T \backslash\{1\}} \bar{y}_t = 1$ and $ \bar{y}_1 = 0 $. 
	The latter is true since $ (\boldsymbol{e}, \bar{y}) = \sum_{t \in T \backslash\{1\}} \bar{y}_t  (\boldsymbol{e}, \boldsymbol{f^t}) $, and by assumption (iii), $ (\boldsymbol{e}, \boldsymbol{f^t}) \in P $ for each $ t \in T \backslash \{1\} $. Therefore, in the remaining of this subsection, we only consider the case $\sum_{t\in T\backslash\{1\}}\y_t< 1$.
	
Let 
\[\tilde{Q}:=\bigg \{q \in Q:\x_q > \sum_{t\in T\backslash\{1\}}\y_t\bigg \}\label{Qtilde}.\]
Based on Propositions \ref{propcase1} and \ref{propcase2}, we can derive similar results under the additional assumption (iii) in the case $ |T| \geq 2 $. This is summarized in Propositions \ref{propcase3} and \ref{propcase4}. 

\begin{proposition}\label{propcase3}
	Let $(\x,\y) \in X_{\rm{LP}}$. Suppose that $|T| \geq 2$, 
	{\rm(i)}, {\rm(ii)}, {\rm(iii)}, \eqref{first} and $\sum_{t\in T\backslash\{1\}}\y_t < 1$ hold. Then we have the followings.
	\begin{itemize}
	\item[\rm{(a)}] If $ \tilde{Q} = \varnothing $, the inequality $0 \leq y_1 + r \sum_{t\in T\backslash\{1\}} y_t -r$ solves the separation problem \eqref{seppromain}.
	\item[\rm{(b)}] If $ \tilde{Q} \neq \varnothing $, the inequality $x_d\leq 
	y_1 + (r+1)\sum_{t\in T \backslash\{1\}}y_t-r$ solves the separation problem \eqref{seppromain}. 
	\end{itemize}
Moreover, in both two cases, the inequalities define facets of polyhedron $ P $, respectively.
\end{proposition}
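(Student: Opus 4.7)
The plan is to verify, for each of the two cases separately, four properties of the proposed inequality: it is primal feasible for problem \eqref{seppromain}, it is valid for $P$, it defines a facet of $P$, and it attains the optimum of \eqref{seppromain}. Primal feasibility ($\beta_1=1$ and $\beta_t\geq 0$) is immediate from inspection of the proposed coefficients. The other three parts are treated in turn.

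For validity, by Proposition \ref{vertex} it suffices to evaluate the inequality at every point $(x,\rho_t(x)\boldsymbol{f^t})$ in the vertex set of $P$. The key structural facts to invoke are that assumption (ii) forces $\rho_1(x)\geq r+1$ whenever $x\neq\boldsymbol 0$; assumption \eqref{first} forces $\rho_1(x)\leq r+1$ for every $x\in\{0,1\}^{|Q|}$; and assumption (iii) forces $\rho_t(x)\leq 1$ for all $t\in T\backslash\{1\}$. Substituting these bounds into the candidate inequality reduces the verification in each subcase to a short algebraic check.

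For facet-definingness in case (a), I would exhibit the $|Q|+|T|$ tight points $(\boldsymbol 0,r\boldsymbol{f^1})$, $(\boldsymbol 0,\boldsymbol{f^t})$ for $t\in T\backslash\{1\}$, and $(\boldsymbol{e^q},\boldsymbol{f^2})$ for $q\in Q$. For case (b), I would use $(\boldsymbol 0,r\boldsymbol{f^1})$, $(\boldsymbol{e^d},(r+1)\boldsymbol{f^1})$, the points $(\boldsymbol{e^d}+\boldsymbol{e^q},(r+1)\boldsymbol{f^1})$ for $q\in Q\backslash\{d\}$, and $(\boldsymbol{e^d},\boldsymbol{f^t})$ for $t\in T\backslash\{1\}$. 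In each case, tightness at the claimed inequality is immediate; membership in $P$ is ensured by \eqref{first} (for the $(r+1)\boldsymbol{f^1}$-points) and by (iii) (for the $\boldsymbol{f^t}$-points); and affine independence reduces to a block-structure check on the coordinate differences with the first point used as baseline.

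The main obstacle is optimality, which I would establish via LP duality by constructing an explicit dual feasible solution whose objective matches the candidate primal value. The dual of \eqref{seppromain} asks for nonnegative weights $\mu_k$ on vertices of $P$ with $\sum_k\mu_k=1$, $\sum_k\mu_k x^k=\bar x$, and $\sum_k\mu_k y^k_t\leq\bar y_t$ for $t\in T\backslash\{1\}$, minimizing $\sum_k\mu_k y^k_1-\bar y_1$. Write $S=\sum_{t\in T\backslash\{1\}}\bar y_t$. In case (a), where $\bar x_q\leq S$ for every $q$, I place weight $1-S$ on $(\boldsymbol 0,r\boldsymbol{f^1})$ and, for each $t\in T\backslash\{1\}$, split weight $\bar y_t$ among the tight vertices $(x,\boldsymbol{f^t})$ according to the product distribution on $\{0,1\}^{|Q|}$ with marginals $\bar x_q/S$; the feasibility condition $\bar x_q/S\in[0,1]$ is exactly the hypothesis $\tilde Q=\varnothing$. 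In case (b), where $\bar x_d>S$, I place weight $1-\bar x_d$ on $(\boldsymbol 0,r\boldsymbol{f^1})$, weight $\bar x_d-S$ on the tight vertices $(x,(r+1)\boldsymbol{f^1})$ with $x_d=1$, and weight $\bar y_t$ on the tight vertices $(x,\boldsymbol{f^t})$ with $x_d=1$ for each $t\in T\backslash\{1\}$, distributing the remaining coordinates by the product distribution with marginals $\bar x_q/\bar x_d$ for $q\in Q\backslash\{d\}$; positivity of $\bar x_d-S$ follows from $d\in\tilde Q$ (which holds since $d=\argmax_q\bar x_q$ and $\tilde Q\neq\varnothing$), while $\bar x_q/\bar x_d\leq 1$ is automatic from the maximality of $d$. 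A direct arithmetic check shows that the dual objective so obtained equals the primal objective of the proposed inequality, which by weak duality establishes optimality and completes the proof.
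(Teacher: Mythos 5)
Your proposal is correct, but the optimality step takes a genuinely different route from the paper. The paper characterizes the vertex set of $P$ explicitly under assumptions (ii), (iii) and \eqref{first}, strips the dominated constraints from \eqref{seppromain} to obtain the small LP \eqref{easycases3}, relaxes $\beta_t\geq 0$, eliminates $\beta_t=\sum_{q}\alpha_q-\gamma$ for $t\in T\backslash\{1\}$, rescales by $1-\sum_{t\in T\backslash\{1\}}\y_t>0$, drops the variables indexed by $Q\backslash\tilde{Q}$, and then invokes the closed-form Lemma \ref{lpclosedform1} to read off the optimal $(\alpha,\gamma)$ before checking a posteriori that the recovered $\beta_t$ are nonnegative. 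You instead certify optimality by weak LP duality, exhibiting an explicit dual solution: a convex combination of tight vertices of $P$ whose $x$-marginal is $\bar{x}$ and whose $y_t$-marginals for $t\neq 1$ do not exceed $\bar{y}_t$, with the splitting threshold $\bar{x}_d$ versus $S=\sum_{t\in T\backslash\{1\}}\bar{y}_t$ governed exactly by whether $\tilde{Q}$ is empty; I checked the weights and marginals in both cases and the dual objective indeed matches the primal value. Your route is more self-contained (it bypasses Lemmas \ref{lemma2} and \ref{lpclosedform1} and the reduction chain entirely) and produces a constructive optimality certificate, while the paper's reduction has the advantage of enumerating all candidate optima of the reduced LP, which is what powers the finer case analysis in Proposition \ref{propcase4}. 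Two minor points to tighten: Proposition \ref{vertex} asserts that the points $(x,\rho_t(x)\boldsymbol{f^t})$ are vertices, not that they exhaust the vertex set, so your validity check should be phrased as a direct verification over the explicit decomposition of $X$ (every $(x,k)\in X$ dominates one of these points componentwise in $y$ under assumption (iii), and $\beta\geq 0$ does the rest); and in case (a) the marginals $\bar{x}_q/S$ require $S>0$, with the degenerate case $S=0$ forcing $\bar{x}=\boldsymbol{0}$ and a single atom at $(\boldsymbol{0},r\boldsymbol{f^1})$.
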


\begin{proposition}\label{propcase4}
	Let $(\x,\y) \in X_{\rm{LP}}$. Suppose that $|T| \geq 2$, 
	{\rm(i)}, {\rm(ii)}, {\rm(iii)}, 
	\eqref{third}, and $\sum_{t\in T\backslash\{1\}}\y_t < 1$ hold. Then we have the followings.
	\begin{itemize}
		\item[\rm{(a)}] If $ \tilde{Q} =\varnothing $, the inequality $0 \leq y_1 + r \sum_{t\in T\backslash\{1\}} y_t -r$ solves the separation problem \eqref{seppromain}.
		\item[\rm{(b)}] If $ \tilde{Q}\neq \varnothing$ and $\tilde{Q} \neq Q$, 
		the inequality $x_d\leq y_1 + (r+1)\sum_{t\in T \backslash\{1\}}y_t-r$ solves the separation problem \eqref{seppromain}.
		\item[\rm{(c)}] If $\tilde{Q} = Q$ with  $|Q|\leq 2$, the inequality $\sum_{q\in Q}x_q\leq 
		y_1 + (r+|Q|)\sum_{t\in T \backslash\{1\}}y_t-r$ solves the separation problem \eqref{seppromain}.
		\item[\rm{(d)}] If $\tilde{Q} = Q$ with  $|Q|\geq 3$, one of the following three inequalities solve the separation problem \eqref{seppromain}, respectively:
		$$\small\!\!\!\!\!\!\!\!\!\!\!\left\{\begin{aligned}
		&\!x_{d}\leq y_1 + 
		(r+1)\sum_{t\in T\backslash\{1\}} y_t-r,\ && \!\!\!\!\!\!\!\text{if}\  \frac{\sum_{q\in 
		Q}\x_q-\sum_{t\in T\backslash\{1\}}\y_t}{|Q|-1} \leq \x_d;\\
		&\!\frac{1}{|Q|-1}\sum_{q \in Q} x_q\leq y_1+ 
		\bigg(r+\frac{|Q|}{|Q|-1}\bigg)\sum_{t\in T\backslash\{1\}} y_t-r,\ &&\!\!\!\!\!\!\!\text{if}\ 
		\x_d < \frac{\sum_{q\in Q}\x_q-\sum_{t\in T\backslash\{1\}}\y_t}{|Q|-1}\leq 1;\\
		&\!\sum_{q\in Q}x_q\leq y_1+(r+2)\sum_{t\in T\backslash\{1\}} 
		y_t-r+|Q|-2,\ &&\!\!\!\!\!\!\!\text{if}\ \frac{\sum_{q\in 
		Q}\x_q-\sum_{t\in T\backslash\{1\}}\y_t}{|Q|-1}> 1.
		\end{aligned} \right.$$
	\end{itemize}
	Moreover, in all four cases, the inequalities define facets of polyhedron $ P $, respectively.
\end{proposition}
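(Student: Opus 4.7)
The plan is to prove Proposition \ref{propcase4} by extending the single-facility analysis of Proposition \ref{propcase2} to multiple facilities through a sequential lifting that assumption {\rm(iii)} makes completely explicit. For each of cases (a)--(d) I need to verify three things: (1) the candidate inequality is valid for $P$; (2) the induced triple $(\alpha,\beta,\gamma)$ is an optimal solution of \eqref{seppromain}; (3) the inequality is facet-defining. Across all four cases the coefficients on $x$, $y_1$, and $\gamma$ match exactly those of the matching candidate from Proposition \ref{propcase2}, so the new content lies in determining and justifying the coefficients $\beta_t$ for $t\in T\backslash\{1\}$. The driving observation is that under {\rm(iii)} the vertex $(\boldsymbol e,\boldsymbol f^t)$ belongs to $P$; the objective of \eqref{seppromain} is non-increasing in each $\beta_t$ since $\y_t\geq 0$, while validity at $(\boldsymbol e,\boldsymbol f^t)$ forces $\beta_t\geq \alpha^{\top}\boldsymbol e-\gamma$, pinning the optimal value of $\beta_t$ to $\alpha^{\top}\boldsymbol e-\gamma$. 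One checks that each listed candidate uses exactly this value.

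For validity, Minkowski--Weyl together with Proposition \ref{extremeray} reduces the check to every vertex $(x,\rho_t(x)\boldsymbol f^t)$ from Proposition \ref{vertex}; assumption {\rm(iii)} together with {\rm(i)} forces $\rho_t(x)\in\{0,1\}$ for $t\in T\backslash\{1\}$, so the enumeration is finite. At vertices with $t=1$ the inequality restricts to the Proposition \ref{propcase2} candidate, already known to be valid; at vertices $(x,\boldsymbol f^t)$ with $t\in T\backslash\{1\}$ validity reduces to $\alpha^{\top}x\leq \beta_t+\gamma$, which holds at equality for $x=\boldsymbol e$ by the construction of $\beta_t$ and otherwise follows from $\alpha\geq \boldsymbol 0$ (Proposition \ref{facettheorem}(i)). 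For optimality, substituting $\beta_t=\alpha^{\top}\boldsymbol e-\gamma$ into the objective of \eqref{seppromain} and writing $s=\sum_{t\in T\backslash\{1\}}\y_t<1$ reduces it to the linear functional $(\x-s\boldsymbol e)^{\top}\alpha-\y_1\beta_1-(1-s)\gamma$, whose basic-feasible optima coincide with those of the single-facility LP analyzed in the proof of Proposition \ref{propcase2}. Direct comparison of the three candidate objective values $v_1,v_2,v_3$ then produces the breakpoints $\x_d$ and $1$ for the quantity $(\sum_q\x_q-s)/(|Q|-1)$ that split case (d); cases (a)--(c) follow by the same comparison specialized to their assumptions, with $\tilde Q=\varnothing$ corresponding to the degenerate choice $\alpha=\boldsymbol 0$.

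For facet-definingness I would exhibit $|Q|+|T|$ affinely independent vertices of $P$ lying on the candidate hyperplane: first, the $|Q|+1$ affinely independent tight points in the face $\{y_t=0,\,t\in T\backslash\{1\}\}$ supplied by the proof of Proposition \ref{propcase2}; second, the $|T|-1$ points $(\boldsymbol e,\boldsymbol f^t)$ for $t\in T\backslash\{1\}$, each tight by construction of $\beta_t$. The main obstacle will be case (d) subcase 2, where the fractional coefficient $1/(|Q|-1)$ makes the selection of affinely independent integer tight points delicate: one must use the pinched condition \eqref{third}, which permits exactly one unit of slack in $\rho_1$, to produce $|Q|+1$ distinct $x\in\{0,1\}^{|Q|}$ satisfying $\sum_q x_q=(|Q|-1)(\rho_1(x)-r)$, and then verify that appending the $|T|-1$ lifted vertices preserves affine independence. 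Managing that combinatorial bookkeeping cleanly, while ruling out inadvertent coincidences of the tight points across the lifted facility coordinates, will be the most delicate technical step.
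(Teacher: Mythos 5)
Your optimality argument is essentially the paper's: the paper also eliminates $\beta_t$ for $t\in T\backslash\{1\}$ by noting its objective coefficient $-\y_t\leq 0$ and setting $\beta_t=\sum_{q\in Q}\alpha_q-\gamma$, then divides the objective by $1-\sum_{t\in T\backslash\{1\}}\y_t>0$ to land on the single-facility LP of Lemma \ref{lpclosedform2} with the shifted point $(\x_q-s)/(1-s)$, and the breakpoint algebra you describe is exactly what produces the conditions in case (d). Your worry about case (d), second subcase, is also unfounded: the integer points $(\boldsymbol{0},r\boldsymbol{f^1})$, $(\boldsymbol{e}-\boldsymbol{e^q},(r+1)\boldsymbol{f^1})$ for each $q\in Q$, and $(\boldsymbol{e},\boldsymbol{f^t})$ for each $t\in T\backslash\{1\}$ are all tight for the fractional inequality and are affinely independent without further combinatorics.

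The genuine gap is in your facet template for case (a). You propose to take $|Q|+1$ affinely independent tight points lying in the face $\{y_t=0,\ t\in T\backslash\{1\}\}$, ``supplied by the proof of Proposition \ref{propcase2}'', and then append the $|T|-1$ lifted points. For case (a) the candidate is $0\leq y_1+r\sum_{t\in T\backslash\{1\}}y_t-r$, whose restriction to $y_t=0$ ($t\neq 1$) is $y_1\geq r$. This is \emph{not} one of the inequalities of Proposition \ref{propcase2}, and it is not facet-defining for the single-facility polyhedron: by assumption (ii) every vertex with $x\neq\boldsymbol{0}$ has $y_1\geq r+1$, so the only tight vertex with $y_t=0$ for all $t\neq 1$ is $(\boldsymbol{0},r\boldsymbol{f^1})$. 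Your template therefore supplies $1+(|T|-1)=|T|$ tight points, not $|Q|+|T|$, and the construction stalls. The fix (which is what the paper does, via the proof of Proposition \ref{propcase3}) is to draw the remaining $|Q|$ points from outside that sub-face, namely $(\boldsymbol{e^q},\boldsymbol{f^{t'}})$ for each $q\in Q$ and some fixed $t'\in T\backslash\{1\}$, which are tight because $0=0+r\cdot 1-r$. Cases (b), (c), and (d) are unaffected, since there the single-facility restriction really is a facet-defining inequality from Proposition \ref{propcase2} and your two-block construction goes through.
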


\begin{myremark}
Together with the trivial inequalities, the inequalities listed in Propositions {\ref{propcase1}-\ref{propcase4}} describe polyhedron $ P $, respectively.
Otherwise, suppose that there exists a facet-defining inequality \eqref{facet} with $ \beta_1 = 1 $ differing from any {of} the inequalities {in the} list {of} Proposition \ref{propcase1} (Proposition \ref{propcase2}, \ref{propcase3}, or  \ref{propcase4}).
Then there exists a point $ (\x, \y) \in P $ fulfilling inequality \eqref{facet} at equality and the inequalities listed in the proposition are all inactive at this point.
Considering the separation problem \eqref{seppromain} with point $ (\x, \y) $, we know that the inequalities listed in the proposition cannot solve this  problem, which leads to a contradiction. 
\end{myremark}

\section{An exact separation algorithm}\label{sect:ExactSeparation}

Unlike Sect. \ref{generalcond}, in this section, we focus on solving problem \eqref{seppromain} without any assumptions on the data. 
%
%
To begin with, we note that since the number of constraints in problem  \eqref{seppromain} may be exponential, 
from a computational perspective, it is impractical to solve problem \eqref{seppromain} when all the constraints are expressed explicitly.
For this reason, we follow \cite{Vasilyev2016} to solve problem \eqref{seppromain} by decomposing into the following four steps. 

\begin{enumerate}
	\item {\bf Preprocessing}. In order to save computational time, we 
	implement some preprocessing methods before solving problem 
	\eqref{seppromain}. Firstly, instead of solving {problem} \eqref{seppromain} 
	directly, we solve a lower dimensional problem to 
	find a violated inequality for the 
	lower dimensional polyhedron.
	Besides, we avoid {solving problem \eqref{seppromain}} if a most violated inequality is known or point $(\x,\y) \in P$ (e.g., by using Propositions \ref{propcase1}-\ref{propcase4}).

	\item {\bf Row generation.} We find a violated inequality or report that no violated one exists by solving problem \eqref{seppromain} over the lower dimensional polyhedron with a row generation subroutine. 
	\item {\bf Numerical errors}. To avoid numerical instabilities, the constructed inequality is scaled to obtain integral coefficients
	and the right hand side is recomputed to guarantee its validity.
	\item {\bf Sequential lifting}. The variables that are fixed in the preprocessing step are sequentially lifted to obtain a strong valid inequality for polyhedron $ P $.		
\end{enumerate}

\subsection{Preprocessing}\label{subsect:Preprocessing}
Given a point $ (\x, \y) \in X_{\rm{LP}} $, {let us fix the variables which take values on their bounds} and consider the lower dimensional polyhedron $ P(\x, \y) = \conv (X(\x,\y))$ where
$$
X(\bar{x},\bar{y})  =  
\left\{(x,y)\in\{0,1\}^{|\Q|}\times\mathbb{Z}_+^{|\T|}:\sum_{q\in 
\Q}a_qx_q\leq \sum_{t\in \T}b_ty_t +\bar{c} \right\},$$ 
$\Q =\{q\in Q: 0<\x_q<1 \}$, $\T = \{t\in T: \y_t>0 \}$, and $\bar{c} = c-\sum_{q\in Q,\,\x_q=1}a_q$. 
Similar to problem \eqref{seppromain}, we may solve the separation problem over {polyhedron} $ P(\x, \y) $ by considering 
\[\begin{aligned}
\d\max_{\alpha, \beta,\gamma}\ & \d\sum_{q\in \Q}\x_q\alpha_q - \sum_{t\in \T}\y_t\beta_t - \gamma,\\
\st\ & \d\sum_{q\in \Q} x^k_q\alpha_q - \d\sum_{t\in \T}y^k_t\beta_t -\gamma \leq 0,\ k =1 ,\ldots, \bar{u},\\
&\beta_1 = 1,~\beta_t\geq 0,\ \forall\ t\in \T\backslash\{1\},
\end{aligned}\label{lowdimLP}\]
where $(x^1,y^1),\ldots, (x^{\bar{u}},y^{\bar{u}})$, $ \bar{u} \in \mathbb{Z}_+ $, are the vertices of polyhedron $P(\x, \y)$.
Notice that here we, without loss of generality, assume that $ \y_1 >0 $.
Both the numbers of variables and constraints in problem \eqref{lowdimLP} are less {than or equal to those in problem \eqref{seppromain}}. Therefore, it can be expected that problem \eqref{lowdimLP} is easier to be solved than problem \eqref{seppromain}, especially when the number of fixed variables is large.

We next present a preprocessing procedure based on the following observations.
\begin{enumerate}
	\item [(i)] If $ \Q = \varnothing $ and $ \T = \varnothing $, then point $ (\x, \y) \in P(\x,\y) $ and, hence there does not exist a violated inequality.
	\item [(ii)] If $ \Q = \varnothing $ and $ \T = \{1\} $, there is only a single facet-defining inequality $ y_1 \geq \lceil 
	-\bar{c}/b_1 \rceil $ of polyhedron $ P(\x,\y) $. If it is violated by {point} $ (\x,\y) $, we directly move to the sequential lifting step in Sect. \ref{subsect:SequentialLifting}. Otherwise, we have $ (\x, \y) \in P (\x, \y) $.
	\item [(iii)] If 
	\begin{equation}
		\label{processingcondition3}
	\sum_{q\in \Q}a_q\lceil\x_q\rceil \leq  \sum_{t\in 
	\T}b_t\lfloor\y_t \rfloor+\bar{c},
	\end{equation} 
	then we can conclude that $ (\x, \y) \in P (\x, \y) $. To see this, let $$  K = \left \{(\x, \y): 
	x_q \in \{\lfloor 
	\x_q\rfloor, \lceil \x_q\rceil\},\ \forall q\in \Q,~y_t\in\{\lfloor 
	\y_t\rfloor, \lceil \y_t\rceil\},\ \forall t\in\T\right \}.$$
	It follows from  \eqref{processingcondition3} that $ K \subseteq X(\x, \y) $. Therefore, we have $ (\x, \y) \in \conv(K) \subseteq P(\x, \y)$.  

	\item [(iv)] For {polyhedron} $ P(\x, \y) $, if one of the conditions in 
	Propositions \ref{propcase1}-\ref{propcase4} is satisfied, we move to the sequential lifting step in Sect. \ref{subsect:SequentialLifting} or conclude that point $ (\bar{x}, \bar{y}) \in P(\x, \y) $ depending on whether or not one of the inequalities listed in the corresponding proposition is violated by point $ (\x, \y) $. 
\end{enumerate}

\subsection{Row generation}\label{subsect:RowGeneration}
We now describe the row generation subroutine for solving problem \eqref{lowdimLP}.
Instead of solving the whole problem \eqref{lowdimLP} with the potentially exponential many constraints, the row generation subroutine solves a problem with a subset of constraints, i.e.,
\[\begin{aligned}
v(U) = \d\max_{\alpha, \beta,\gamma}\ & \d\sum_{q\in \Q}\x_q\alpha_q  - \sum_{t\in \T}\y_t\beta_t -\gamma,\\
\st\ & \d \sum_{q\in \Q}x_q\alpha_q -\sum_{t\in \T}y_t\beta_t - \gamma \leq 0,\ \forall~(x,y)\in U ,\\
& \beta_t\geq 0,\ \forall\ t\in \T\backslash\{1\}, \  \beta_1 = 1,
\end{aligned}\label{partialLP}\]
in each iteration, where $ U \subseteq P(\x, \y)$.  
We call \eqref{partialLP} the {\it partial separation problem}. Apparently, the 
partial separation problem \eqref{partialLP} is a relaxation of problem \eqref{lowdimLP}.
Therefore, if $v(U) \leq 0$, we conclude that point $(\x,\y)\in P(\x, \y)$; otherwise, the solution $ (\bar{\alpha}, 
\bar{\beta}, \bar{\gamma}) $ of problem \eqref{partialLP} corresponds to the inequality
\begin{equation}\label{lowerdimvalidineq}
	\sum_{q\in \Q}\bar{\alpha}_qx_q \leq \sum_{t\in \T}\bar{\beta}_ty_t+ \bar{\gamma},
\end{equation}
which is violated by {point} $ (\x, \y) $.
To further test whether or not inequality \eqref{lowerdimvalidineq} is valid for $ X(\x, \y) $, 
we solve the following unbounded integer knapsack problem:  
\[ z= \d\max_{x,y}\left \{ \d\sum_{q\in\Q}\bar{\alpha}_qx_q -\sum_{t\in\T}\bar{\beta}_ty_t- \bar{\gamma}: (x,y) \in X(\x, \y)\right \}.\label{subproblem}\]
If $z\leq 0$, inequality \eqref{lowerdimvalidineq} is valid for $ X(\x, \y) $; otherwise the optimal solution of \eqref{subproblem} violates inequality \eqref{lowerdimvalidineq}, and hence, we add this solution into set $ U $ and the procedure continues.

In contrast to the separation problem \eqref{seppromain}, {for} which it is shown to be bounded in Lemma \ref{lemma1}, the partial separation problem \eqref{partialLP} can be unbounded.
\begin{example}
	Let $X^0= \left \{( x, y)\in \{0,1\}\times\mathbb{Z}_+: 3x\leq 5y\right \}$ and $( \x, 
	\y) = (0.5,0.3)$. 
	Initializing $ U = \varnothing $, the partial separation problem \eqref{partialLP} reduces to
	$v(\varnothing) = \max_{\alpha,\gamma} \left \{0.5\alpha-0.3-\gamma\right \}$, which is unbounded. 
\end{example} 
In order to avoid the partial separation problem \eqref{partialLP} to be unbounded, we shall provide some bounds on the variables. 
As it has been shown in Theorem \ref{solisfacet}, the basic optimal solution of problem \eqref{lowdimLP} corresponds to a facet-defining inequality of {polyhedron} $ P(\x, \y) $. 
Combining it with Proposition \ref{facettheorem} and the fact that $ \beta_1=1 $, we can add the bound constraints
	\begin{equation}
	\label{bounds}
	\begin{aligned}
	& &&  0 \leq\alpha_q\leq \lceil a_q/ b_1\rceil,  &&\forall ~q\in \Q,\\
	& && 1 \leq\beta_t \leq \d\lceil b_t/b_{1}\rceil, 
	&&\forall ~t\in \T,\\
	& && \min\left \{ -\lceil -\bar{c}/b_1\rceil, 0\right \}\leq\gamma,&&
	\end{aligned}
	\end{equation}
to problem \eqref{lowdimLP} and then derive a stronger partial separation problem:
\[\begin{aligned}
v(U) = \d\max_{\alpha, \beta,\gamma}\ & \d\sum_{q\in \Q}\x_q\alpha_q  - \sum_{t\in \T}\y_t\beta_t -\gamma,\\
\st\ & \d \sum_{q\in \Q}x_q\alpha_q -\sum_{t\in \T}y_t\beta_t - \gamma \leq 0,\ 
&&\forall\ (x,y)\in U ,\\
&  0\leq  \alpha_q \leq \lceil {a_q}/{b_1}\rceil,\ &&\forall\ q\in\Q,\\
&  1  \leq \beta_t\leq \lceil {b_t}/{b_1}\rceil,\ &&\forall\ 
t\in  \T,\\
& \gamma\geq \min\{ -\lceil -\bar{c}/b_1\rceil, 0\}.
\end{aligned}\label{partialLP1}\]
Clearly, problem \eqref{partialLP1} is bounded.

Another issue needed to be addressed is that the optimal solution of the unbounded knapsack problem \eqref{subproblem} may not be unique. The inequality derived by some optimal solution may be stronger than those derived by other optimal solutions. 
\begin{example}
	\label{ex3}
	Let $X_3 = \left \{(x, y)\in\{0,1\}^4\times\mathbb{Z}_+: 
	11x_1+15x_2+24x_3+50x_4\leq 60y\right \}$. Consider the point $(\x,\y) = (0.9, 0.5, 0.7, 0.1, 0.7)$.
	After solving the partial separation problem \eqref{partialLP1} with $U = \{(1,1,1,1,2)\}$, we obtain the solution $ (\alpha, \beta, \gamma) = (1,0,1,0,1, 0) $ corresponding to the inequality $x_1+x_3\leq y$. Considering
	the associated unbounded integer knapsack problem \eqref{subproblem}, we have two optimal solutions 
	$(1,0,1,0,1)$ and $ (1,1,1,0,1)$ which correspond to the inequalities 
	\[\alpha_1+\alpha_3 -1 -\gamma \leq 0\label{firstineq} \]
	and
	\[\alpha_1+ \alpha_2+\alpha_3 -1 -\gamma \leq 0, \label{secondineq}\]
	respectively. Obviously, inequality \eqref{secondineq} is stronger than inequality \eqref{firstineq}.  
	If we add inequality \eqref{secondineq} to 
	problem \eqref{partialLP1}, in the next iteration, we will obtain the optimal solution $  (\alpha, \beta, \gamma) = (1,0,0,1,1,0) $ corresponding to the inequality $x_1+x_4\leq y$, which is violated by {point} $ (\x, \y) $.
	Moreover, by solving unbounded integer knapsack problem \eqref{subproblem}, we know that $x_1+x_4\leq y$ is valid for $ P(\x,\y) $. However, if we add inequality 
	\eqref{firstineq} to problem \eqref{partialLP1}, it can be checked that it needs more iterations to solve the separation problem \eqref{lowdimLP}.
\end{example}

We see from Example \ref{ex3} that, to speed up the row generation procedure, among (possible) multiple optimal solutions of the unbounded integer knapsack problem, it is crucial to select one which corresponds to a stronger inequality for problem \eqref{partialLP1}. Due to this, we next describe an iterative approach to modify an existing optimal solution for problem \eqref{subproblem} such that the new optimal solution corresponds to a (possibly) stronger constraint in problem \eqref{partialLP1}.

Let $(\tilde{x},\tilde{y})$ be an optimal 
solution of the unbounded integer knapsack problem \eqref{subproblem}. The corresponding inequality in problem \eqref{partialLP1} is 
\[\sum_{q\in \Q}\tilde{x}_q\alpha_q  -\sum_{t\in 
	\T}\tilde{y}_t\beta_t - 
\gamma \leq 0. \label{cons2}\]
Now suppose that $\tilde{x}_{q'} = 0$ for some $ q' \in \Q $ and point  $(\tilde{x}+\boldsymbol{e^{q'}},\tilde{y})\in X(\x,\y)$. 
Notice that as point $(\bar{\alpha}, \bar{\beta}, \bar{\gamma})$ is feasible solution {of} problem \eqref{partialLP1}, we have $ \bar{\alpha}_{q'} \geq 0 $. 
Hence $(\tilde{x}+\boldsymbol{e^{q'}},\tilde{y})$ is also an optimal solution of problem \eqref{subproblem} corresponding to 
the inequality \[\sum_{q\in \Q}\tilde{x}_q\alpha_q + \alpha_{q'}-\sum_{t\in 
	\T}\tilde{y}_t\beta_t - \gamma 
\leq 0, \label{cons1}\] 
which is obviously stronger than inequality \eqref{cons2}. 
Furthermore, we can recursively use this argument to strengthen an
inequality based on the new optimal solution. We describe this iterative approach in 
Algorithm \ref{acceleration}.

\begin{algorithm}[!htbp]
	\caption{A procedure to obtain a stronger constraint for problem \eqref{partialLP1}}
	\label{acceleration}
	\begin{algorithmic}[1]
		\REQUIRE The set $ X_{LP}(\x, \y) $ and the point $ (\tilde{x}, \tilde{y}) \in X(\x, \y)$. 
		\ENSURE A new point $(\tilde{x}, \tilde{y})$ corresponding to a (possible) 
		stronger inequality of problem \eqref{partialLP1}.
		\STATE Reorder $a_1\geq a_2 \geq\cdots\geq a_{|\Q|} $;
		\FOR{ $q=1,...,|\Q|$}
		\IF{ $\tilde{x}_q = 0 $ and $a^{\top}\tilde{x} + a_q \leq 
			b^{\top}\tilde{y}+\bar{c}$ }
		{
			\STATE $\tilde{x}_q \leftarrow 1$;
		}
		\ENDIF
		\ENDFOR
	\end{algorithmic}
\end{algorithm}

In Algorithm \ref{acceleration}, we first sort the variables such that $a_1\geq a_2 \geq\cdots\geq a_{|\Q|} $ in Step 1.
We then recursively modify a point to a new point which corresponds to a (possibly) stronger constraint for problem \eqref{partialLP1} in Steps 3-5.

To conclude this subsection, we present our row generation procedure in Algorithm \ref{rowgeneration}.
\begin{algorithm}[!htbp]
	\caption{Row generation procedure}
	\label{rowgeneration}
	\begin{algorithmic}[1]
		\REQUIRE The set $ X(\x, \y) $ and the point $ (\bar{x}, \bar{y}) \in X_{\rm{LP}}(\x, \y) $.
		\ENSURE Find a violated inequality \eqref{lowerdimvalidineq} for polyhedron $P(\x,\y)$ or conclude that point $(\bar{x},\bar{y}) \in P(\x, \y)$.
		\STATE Initialize the paritial separation problem \eqref{partialLP1} with $U =\varnothing$;
		\STATE Solve the paritial separation problem \eqref{partialLP1} with the solution $(\bar{\alpha}, \bar{\beta}, \bar{\gamma})$ and the optimal value $v(U)$;
		\STATE  If $v(U) \leq 0$, stop and conclude that $(\bar{x},\bar{y}) \in P(\x, \y)$;
		\STATE Solve the unbounded integer knapsack problem \eqref{subproblem} with the solution $(\tilde{x}, \tilde{y})$  and the optimal value $\tilde{z}$;
		\STATE  If $\tilde{z}> 0$, {modify the solution $(\tilde{x}, \tilde{y})$ using Algorithm \ref{acceleration}} and set $U \leftarrow U \cup 
		(\tilde{x}, \tilde{y})$. Go to Step 2;
		\STATE Return the violated inequality \eqref{lowerdimvalidineq} for polyhedron $P(\x, \y)$;
	\end{algorithmic}
\end{algorithm}

In Algorithm \ref{rowgeneration}, we first initialize the set $U = \varnothing$ in Step 1. We then solve problem \eqref{partialLP1} in Step 2 using the dual simplex method with the warm start information in the last step, see for example \cite{Koberstein2005}. 
In Step 3, the optimal value of problem \eqref{partialLP1} is nonpositive, and hence point $(\bar{x},\bar{y}) \in P(\x, \y)$. 
In Step 4, we modify the code BOUKNAP, written by Pisinger \cite{Pisinger2000}, to solve the unbounded integer knapsack problem \eqref{subproblem} with fractional value costs.
In Step 5, we know the current inequality is invalid for $P(\x,\y)$. 
Therefore, we first modify the solution (obtained in Step 4) using Algorithm \ref{acceleration} such that it corresponds to a (possibly) stronger inequality. Then we add the corresponding inequality to problem \eqref{partialLP1} and go to Step 2.
Finally, in Step 6, if $\tilde{z} \leq 0$, we obtain the valid inequality 
\eqref{lowerdimvalidineq}, which is violated by point $(\x,\y)$.

\subsection{Numerical errors}\label{subsect:NumericalErrors}
Due to the numerical errors incurred in solving problem \eqref{partialLP1}, we may get an invalid inequality in Algorithm \ref{rowgeneration}.
To avoid this, we scale the inequality to obtain integral coefficients by solving the integer programming problem
$$
\begin{aligned}
\d\min_{\alpha,\beta,\theta} ~&\theta,\\
\st~& \bar{\beta}  \theta = \beta,\\
~& \bar{\alpha}  \theta = \alpha,\\
~& \beta_t\in \mathbb{Z},\, ~\forall~t\in\T,\\	
~& \alpha_q\in \mathbb{Z}, ~\forall~q\in\Q,\\
~& \theta \geq 1.
\end{aligned}
$$
Vasilyev et al. \cite{Vasilyev2016} suggested to use the enumeration of the multiplier $\theta$ from $2$ to $10^4$ and checked the integrality within the tolerance $10^{-5}$. 
To further avoid too much computational efforts, here we use the approach described in \cite{Achterberg2009}.
More formally, let $\frac{\lambda_q}{\mu_q}$ be the rational representation of coefficient $\bar{\alpha}_q$.
Note that $\frac{\lambda_q}{\mu_q}$ can be obtained by using the Euclidean algorithm within a small tolerance. 
To avoid too large numbers, we give the following requirements:
$$\bigg |\frac{\lambda_q}{\mu_q} - \bar{\alpha}_q \bigg|\leq \epsilon := 10^{-9},~|\lambda_q|\leq \lambda_{\max} := 10^6,~\text{and}~\mu_q\leq \mu_{\max} := 10^3.$$
The representation of $\bar{\beta}_t$ for each $ t \in T $ is also required to be satisfied with the same restrictions. 
Let $ \mu $ be the least common multiple of all denominators. We also require $\mu \leq \mu_{\max}$.
We scale $ \bar{\alpha} $ and $ \bar{\beta} $ by setting $\alpha_q = \mu\bar{\alpha}_q$ for all $q\in\Q$ and $\beta_t = \mu\bar{\beta}_t$ for all $ t\in\T$  with the requirement that $|\mu\alpha_{q}|\leq \mu_{\max}$ and $|\mu\beta_{t}|\leq \mu_{\max}$. 
If all the requirements are satisfied, we accept this inequality; otherwise, we drop it. 

After scaling $ \alpha_q $ for all $ q \in Q $ and $ \beta_t $ for all $ t \in T $, we recompute the right hand side $\gamma $ by solving the unbounded integer knapsack problem 
\[\gamma = \max_{x,y}\left \{\sum_{q\in \Q}\alpha_qx_q - \sum_{t\in \T}\beta_ty_t  : \d (x,y) \in X(\x, \y) \right \}.\label{initiallifting}\]
This leads to the inequality 
 \begin{equation}
 	\label{partialvalidineq}
 	\sum_{q \in \Q} \alpha_q x_q \leq \sum_{t \in \T} \beta_t y_t + {\gamma},
 \end{equation}
 which is valid for $P(\x, \y)$.

\subsection{Sequential lifting}
\label{subsect:SequentialLifting}
The inequality \eqref{partialvalidineq} is valid for $ P(\x, \y) $. However, in general, it {may be} invalid for $ P $. 
To resolve this problem, the variables, {which are} fixed in the preprocessing step, are sequentially lifted according to a given lifting order $ \Pi $, i.e., a permutation of $ (Q \backslash \Q) \cup (T \backslash \T)$. 
We now illustrate the procedure to lift the first variable with index $ k $ based on inequality \eqref{partialvalidineq}.
Denote
$$W(C) = \max_{(x,y) \in 
	\{0,1\}^{|\Q|} \times \mathbb{Z}_+^{|\T|} } \left \{\sum_{q\in \Q}\alpha_qx_q  - \sum_{t\in \T}\beta_ty_t: 
\d  \sum_{q \in \Q}a_q x_q  \leq \sum_{t \in \T} b_t y_t + C\right\}.$$ 
 We have the following three cases.
\begin{itemize}
	\item [1)]
If $ k \in Q \backslash \Q $ and $\x_k = 0$, then the lifted inequality is
 \begin{equation}
\label{liftineq1}
\sum_{q\in \Q}\alpha_qx_q + \alpha_kx_k \leq \sum_{t\in \T}\beta_ty_t+ {\gamma},
 \end{equation}
where $\alpha_k ={\gamma} - W(\bar{c}-a_k).$
\item [2)] If $ k \in  Q \backslash \Q $ and $\x_k = 1$, then the lifted inequality is 
\begin{equation}
\label{liftineq2}
\sum_{q\in \Q}\alpha_qx_q + \alpha_k(x_k-1) \leq \sum_{t\in \T}\beta_ty_t +{\gamma},
\end{equation}
where 
$\alpha_k = W(\bar{c}+a_k)-{\gamma}.$
\item [3)]If $ k \in T \backslash \T  $ with $\y_k = 0$, then the lifted inequality is
\begin{equation}
\label{liftineq3}
\sum_{q\in \Q}\alpha_qx_q \leq \sum_{t\in \T}\beta_ty_t+\beta_ky_k+{\gamma},
\end{equation}
where 
$$
\beta_k=\max\left\{(W(\bar{c}+\ell b_k)- {\gamma})/\ell:\ell = 1,\ldots,\bar{\ell}\right \}.
$$
Here we set $\bar{\ell}=\lceil (\sum_{q\in \Q}a_q-\bar{c})/b_k\rceil $ since $W(\bar{c}+\ell b_k) = \sum_{q\in \Q}\alpha_q$ for all $\ell \geq \bar{\ell}$. 
\end{itemize}
Similarly, we may continue to lift the other variables, which are fixed in the  preprocessing step in Sect. \ref{subsect:Preprocessing}, to obtain a valid inequality of polyhedron $P$.
During the whole lifting process, we need to solve several integer knapsack problems. 
This can be done via the dynamic programming algorithm. 
For more details, we refer to \cite{Vasilyev2016}.

\section{Numerical results}\label{sect:numericalresults}

In order to test the effectiveness of the exact separation algorithm for solving the unsplittable capacitated network design problem, we 
implement it in C++ linked with IBM ILOG CPLEX 
optimizer 12.7.1 \cite{Cplex}
library.
Following \cite{Vasilyev2016}, to 
avoid changing the problem structure, the presolving features are turned off in our experiments.
Moreover, the dual simplex method is used to reoptimize the linear programming problem \eqref{partialLP1} after adding cutting planes.
To eliminate the effect of multithreads, the computations are implemented 
in a single thread.
The time limit is set to 7200 seconds.
Except where explicitly stated, the other parameters in CPLEX are set to the default ones. 
The exact separation procedure stops if the optimal value of the LP relaxation problem of the unsplittable capacitated network design problem improves by less than $0.01\%$ between two adjacent calls.

\subsection{Testsets}\label{testsets}

We conduct our computational study on three testsets of the unsplittable capacitated network design problem \eqref{obj}-\eqref{domain}.
The first testset NDP1, studied in Atamt\"{u}rk et al.  \cite{Atamturk2002}, contains $ 20 $ instances with a single facility. We use this testset to compare the performance effect of the cuts constructed in our exact separation procedure with that of existing cuts studied in the literature. 
The second testset NDP2 includes 26 realistic network instances generated by 
the Survivable Network Design Library (SNDlib 1.0) \cite{SNDlib10}. 
9 of them 
are instances with a single facility and 17 of them are instances with 
multifacility. %
To possibly reduce the unstable behavior of integer programming solvers (see, 
e.g., \cite{Fischetti2014,Andrea2013}), we solve each instance using 10 
different random seeds in 7200 seconds. 
We treat every pair of instance and 
seed as an individual model, which results in 200 models for testset 
{NDP1}  and 260 models for testset {NDP2} . 

The third testset NDP3 is randomly generated based on \cite{Luo2019}.
We use this testset to evaluate the performance effects of different capacity module sizes and module costs on the unsplittable capacitated network design problem using standard integer programming solver or our exact separation procedure.
Table \ref{capandcost} lists different capacity module sizes and module costs studied in \cite{Luo2019}. 
For the same facility, we assume that its module costs on all arcs are the same. In total, we study problems with $ 27 $ different capacities and costs structures.
We generate the underlying graphs with $ 50 $ vertices using the procedure described in \cite{Luo2019}; see also \cite{Magnanti1995,Salman2008}.
We generate 10 graphs with 20  commodities with random source nodes and destination nodes.
The demand of each commodity is chosen uniformly in $ \{10,11,\ldots, 190 \} $.
For each graph, $ 27 $ problems are generated based on each item of capacity module sizes and costs in Table \ref{capandcost}. Thus, in total, we have $ 270 $ models for testset NDP3.

\begin{table}[!htbp]
	\centering
	\setlength{\tabcolsep}{3pt} 
	\renewcommand{\arraystretch}{1.2} 
	\centering
	{\small
		\centering
		\caption{Capacity module sizes and module costs of testset NDP3.}
		\begin{tabular}{|c|l|l|}
			\hline
			& Capacity module sizes & Capacity module costs \\
			\hline
			$1\_1\_1$ & (130) & (10000) \\
			$2\_1\_1$ & (130,50) & (10000,5000) \\
			$3\_1\_1$ & (130,50,20) & (10000,5000,2500) \\
			\hline
			$1\_1\_2$  & (130) & (18000) \\
			$2\_1\_2$  & (130,50) & (18000,9000) \\
			$3\_1\_2$ & (130,50,20) & (18000,9000,5000) \\
			\hline
			$1\_1\_3$  & (130) & (25000) \\
			$2\_1\_3$  & (130,50) & (25000,13000) \\
			$3\_1\_3$ & (130,50,20) & (25000,13000,9000) \\
			\hline
			$1\_2\_1$ & (170) & (10000) \\
			$2\_2\_1$ & (170,70) & (10000,5000) \\
			$3\_2\_1$ & (170,70,30) & (10000,5000,2500) \\
			\hline
			$1\_2\_2$ & (170) & (18000) \\
			$2\_2\_2$ & (170,70) & (18000,9000) \\
			$3\_2\_2$ & (170,70,30) & (18000,9000,5000) \\
			\hline
			$1\_2\_3$  & (170) & (25000) \\
			$2\_2\_3$ & (170,70) & (25000,13000) \\
			$3\_2\_3$ & (170,70,30) & (25000,13000,9000) \\
			\hline
			$1\_3\_1$ & (200) & (10000)\\
			$2\_3\_1$ & (200,80) & (10000,5000)\\
			$3\_3\_1$ & (200,80,30) & (10000,5000,2500) \\
			\hline
			$1\_3\_2$ & (200) & (18000)\\
			$2\_3\_2$ & (200,80) & (18000,9000)\\
			$3\_3\_2$ & (200,80,30) & (18000,9000,5000) \\
			\hline
			$1\_3\_3$  & (170) & (25000) \\
			$2\_3\_3$ & (200,80) & (25000,13000)\\
			$3\_3\_3$ & (200,80,30) & (25000,13000,9000) \\
			\hline
		\end{tabular}
		\label{capandcost}
	}
\end{table}

\subsection{Different lifting orders}
In the sequential lifting step, different lifting orders lead to different inequalities \cite{Vasilyev2016}. 
Hence the first experiment is conducted to test performance effect of different lifting orders. 
We consider the following four lifting orders.

\begin{itemize}
	\item [$\bullet$]{LIFT1}: Variables, fixed to one, are lifted first 
	in decreasing order of their coefficients. Then variables, fixed 
	to zero, are lifted also in decreasing order of their coefficients.
	\item [$\bullet$]{LIFT2}: Variables are lifted in decreasing order of their 
	coefficients.
	\item [$\bullet$]{LIFT3}: Variables are lifted in decreasing order of their 
	reduced costs.
	\item [$\bullet$]{LIFT4}: Variables are lifted in increasing order of their 
	reduced costs.
\end{itemize}

\begin{table}[!htbp]
	\centering
	\centering
	\setlength{\tabcolsep}{3pt} 
	\renewcommand{\arraystretch}{1.2} 
	\small
	\caption{Performance comparison of different lifting orders.}
	\begin{tabular}{|c|cccc|}
		\hline
		Testset& {LIFT1} & {LIFT2} & {LIFT3} & {LIFT4}\\
		\hline
		{NDP1}   &71.51&71.51&71.52& 71.54\\ 
		{NDP2}  &54.39& 54.49&54.12&54.96\\ 
		{NDP3}  &87.35& 87.29&87.21&87.40\\ 
		\hline
	\end{tabular}\label{gapclosed}
\end{table}

Table \ref{gapclosed} presents the 
arithmetic means of the \emph{gap closeds} \cite{Wolter2006} of all models in the corresponding testsets. 
The {gap closed} is defined as 
$$100 \cdot \d\frac{z_{\rm{root}}-z_{\rm{LP}}}{z_{\rm{ub}} - z_{\rm{LP}}},$$
where $z_{\rm{root}}$ is the objective value of the LP relaxation at root node after 
adding cuts, $z_{\rm{LP}}$ is the value of the LP relaxation before adding 
cuts, and $z_{\rm{ub}}$ is the value of the best known value of the model. 
Table \ref{gapclosed} shows that using the lifting order LIFT4, the gap closed is slightly better than those of other lifting orders in all the three testsets. 
Therefore, in further computational studies, the lifting order {LIFT4} is used in our exact separation algorithm.

\subsection{Performance effect of the exact separation procedure}
In this subsection, we evaluate the performance effect of adding cuts generated by {the} exact 
separation algorithm into the solver.

\begin{table}[!htbp]
	\centering
	\setlength{\tabcolsep}{3pt} 
	\renewcommand{\arraystretch}{1.2} 
	{\small
		\centering
		\caption{Performance comparison of the exact separation procedure with the default setting for testset {NDP1}.}
		\begin{tabular}{|l|c|ccc|ccccc|}
			\hline
			\multirow{2}{1cm}{Bracket}&\multirow{2}{0.6cm}{total}	&	
			\multicolumn{3}{c|}{CPX}					&	
			\multicolumn{5}{c|}{EXACT}									
			\\\cline{3-10}
			&		&	solved	&	nodes	&	time	&	solved	
			&	nodes	
			&	time	&	faster	&	slower	\\\hline
			all	&	200	&	145	&	1313	&	32	&	151	&	1105	&	31	
			&	34	&	79	\\
			$[1,7200]$	&	125	&	116	&	2968	&	48	&	122	&	2476	
			&	46	&	34	&	65	\\
			$[10,7200]$	&	61	&	52	&	33410	&	304	&	58	&	29352	
			&	274	&	23	&	21	\\
			$[100,7200]$	&	34	&	25	&	296061	&	2266	&	31	&	
			219844	&	1733	&	21	&	5	\\
			$[1000,7200]$	&	24	&	15	&	423246	&	4142	&	21	&	
			356030	&	3469	&	13	&	4	\\\hline
		\end{tabular}\label{unsplit}
	}
\end{table}

\begin{table}[!htbp]
	\centering
	\setlength{\tabcolsep}{3pt} 
	\renewcommand{\arraystretch}{1.2} 
	{\small
		\centering
		\caption{Performance comparison of the exact separation procedure with the default setting for testset {NDP2}.}
		\begin{tabular}{|l|c|ccc|ccccc|}
			\hline
			\multirow{2}{1cm}{Bracket}&\multirow{2}{0.6cm}{total}	&	
			\multicolumn{3}{c|}{CPX}					&	
			\multicolumn{5}{c|}{EXACT}									
			\\\cline{3-10}
			&	&	solved	&	nodes	&	time	&	solved	
			&	nodes	
			&	time	&	faster	&	slower	\\\hline
			all	&	260	&	55	&	26062	&	291	&	63	&	21904	&	269	
			&	33	&	20	\\
			$[1,7200]$	&	63	&	55	&	26062	&	291	&	63	&	21904	
			&	269	&	33	&	20	\\
			$[10,7200]$	&	56	&	48	&	38772	&	427	&	56	&	31884	
			&	391	&	30	&	17	\\
			$[100,7200]$	&	49	&	41	&	48949	&	595	&	49	&	
			38996	&	531	&	27	&	14	\\
			$[1000,7200]$	&	19	&	11	&	113188	&	3410	&	19	&	
			86696	&	2738	&	12	&	6	\\\hline
		\end{tabular}\label{sndlib}
	}
\end{table}

\begin{table}[!htbp]
	\centering
	\setlength{\tabcolsep}{3pt} 
	\renewcommand{\arraystretch}{1.2} 
	{\small
		\centering
		\caption{Performance comparison of the exact separation procedure with the default setting for testset {NDP3}.}
		\begin{tabular}{|l|c|ccc|ccccc|}
			\hline
			\multirow{2}{1cm}{Bracket}&	\multirow{2}{0.6cm}{total}	&	
			\multicolumn{3}{c|}{CPX}				&	
			\multicolumn{5}{c|}{EXACT}			
			\\\cline{3-10}
			& 	&	solved	&	nodes	&	time	&	solved	
			&	nodes	
			&	time	&	faster	&	slower	\\\hline
			all	&	270	&	139	&	165616	&	1180	&	210	&	9172	&	145	&	196	&	7	\\
			$[1,7200]$	&	210	&	139	&	165616	&	1180	&	210	&	9172	&	145	&	196	&	7	\\
			$[10,7200]$	&	203	&	132	&	202814	&	1374	&	203	&	10758	&	158	&	193	&	4	\\
			$[100,7200]$	&	178	&	107	&	331671	&	2211	&	178	&	16789	&	212	&	173	&	1	\\
			$[1000,7200]$	&	125	&	54	&	704511	&	4781	&	125	&	31822	&	359	&	120	&	1	\\\hline
		\end{tabular}\label{Luo2019}
	}
\end{table}

Tables \ref{unsplit}-\ref{Luo2019} compare the computational results obtained by using CPLEX (CPX) and our exact separation procedure (EXACT). We report the number of solved models, the average running time, and the average number of explored nodes\footnote{Shifted 
	geometric mean, 10s for average time and 100 for average nodes 
	\cite{Achterberg2009}.}.
Besides, columns ``faster" and ``slower" report
the number of models that get at least 10\% faster and slower, respectively.
We group 
the three testsets into several brackets. 
The bracket ``all" contains the models which can be solved by at least one of the settings. 
The bracket $[n,7200]$ contains the models which can be solved by the 
slower setting in at least $ n $ seconds. 
The larger $n$ is, the harder of 
the model is. 
For each bracket, we report the number of considered models in column ``total".

As it can be observed in Tables \ref{unsplit}-\ref{Luo2019}, our exact separation algorithm has a positive effect on all these three testsets, especially on the hard models. In particular, using the exact separation algorithm, we can solve $ 6  $, $8$, and $ 71 $ more models than those using the default setting on testsets NDP1, NDP2, and NDP3, respectively.
This clearly shows that our exact separation algorithm can improve the performance of the solver on solving the unsplittable capacitated network design problem.

Specifically, for testset NDP1, using the exact separation algorithm, we have 79 models for which the running times are slower than that of the {default setting} while only 34 models are solved faster. 
This is due to the fact that these models are easier than models in testsets NDP2 and NDP3, and, as a result, the benefit of the exact separation algorithm cannot compensate for its additional overhead on these easy models.

For testset NDP2, we notice that these models are harder than those in testset NDP1 or NDP3. Among these 260 models, CPX and EXACT only solve 55 and 63 of them, respectively. Nevertheless, for the solved models, using EXACT, the average running time decreases from 291s to 269s. 

\begin{table}[!htbp]
	\centering
	\setlength{\tabcolsep}{3pt} 
	\renewcommand{\arraystretch}{1.2} 
	{\small
		\centering
		\caption{Results of different capacity module sizes and module costs on testset NDP3.}
		\begin{tabular}{|c|ccc|ccc|}
			\hline	
			\multirow{2}{1cm}{}&\multicolumn{3}{c|}{CPX} 
			&\multicolumn{3}{c|}{EXACT}\\\cline{2-7}
			&	nodes	&	time	&	solved	&	nodes	&	time	&	
			solved	\\\hline
			1\_1\_1	&	101498	&	1107	&	6	&	52605	&	744	&	7	
			\\
			2\_1\_1	&	133392	&	833	&	7	&	6596	&	96	&	9	\\
			3\_1\_1	&	757202	&	2451	&	5	&	2857	&	55	&	9	
			\\
			\hline
			1\_1\_2	&	115713	&	1274	&	6	&	48061	&	710	&	7	
			\\
			2\_1\_2	&	120158	&	794	&	7	&	6767	&	101	&	8	\\
			3\_1\_2	&	361194	&	1367	&	6	&	2927	&	66	&	9	
			\\
			\hline
			1\_1\_3	&	98817	&	1081	&	6	&	50658	&	709	&	7	
			\\
			2\_1\_3	&	135529	&	892	&	6	&	7409	&	99	&	8	\\
			3\_1\_3	&	280378	&	1178	&	7	&	4800	&	78	&	10	
			\\
			\hline
			1\_2\_1	&	77609	&	1228	&	6	&	44647	&	987	&	6	
			\\
			2\_2\_1	&	221946	&	1313	&	6	&	21803	&	217	&	9	
			\\
			3\_2\_1	&	364549	&	1700	&	5	&	4373	&	43	&	10	
			\\
			\hline
			1\_2\_2	&	82223	&	1270	&	6	&	44890	&	973	&	6	
			\\
			2\_2\_2	&	157895	&	1096	&	6	&	22698	&	242	&	8	
			\\
			3\_2\_2	&	487028	&	2328	&	5	&	4140	&	47	&	10	
			\\
			\hline
			1\_2\_3	&	87375	&	1373	&	5	&	42778	&	889	&	6	
			\\
			2\_2\_3	&	165320	&	1115	&	6	&	21347	&	244	&	9	
			\\
			3\_2\_3	&	321156	&	1579	&	8	&	16831	&	149	&	10	
			\\
			\hline
			1\_3\_1	&	247636	&	4362	&	4	&	134158	&	3010	&	
			7	\\
			2\_3\_1	&	334434	&	3353	&	3	&	77329	&	1019	&	
			7	\\
			3\_3\_1	&	507180	&	3190	&	4	&	14150	&	165	&	10	
			\\
			\hline
			1\_3\_2	&	223731	&	3916	&	3	&	125455	&	2897	&	
			5	\\
			2\_3\_2	&	283211	&	2889	&	3	&	90339	&	1325	&	
			6	\\
			3\_3\_2	&	536980	&	3464	&	3	&	14477	&	199	&	9	
			\\
			\hline
			1\_3\_3	&	231183	&	4150	&	3	&	144692	&	3252	&	
			5	\\
			2\_3\_3	&	285635	&	2936	&	3	&	122664	&	1513	&	
			5	\\
			3\_3\_3	&	338583	&	2674	&	4	&	33673	&	460	&	8	
			\\
			\hline
		\end{tabular}
		\label{Luo2019details}
	}
\end{table}

For testset NDP3, Table \ref{Luo2019} shows a significant improvement of using our exact separation algorithm.
The average running 
time decreases from 1180s to 145s. 
To further see where the improvement comes from, we report the computational results of different capacity module sizes and costs of facilities independently in Table 
\ref{Luo2019details}.
To be more specific, we list the results in $ 9 $ different groups corresponding to Table \ref{capandcost}.
In each group, the number of facilities is different in each item (i.e. $ i\_j\_k $). 
There are $ 10 $ different models in each item with different network structures and commodities. 
We report the number of average nodes, the average running time, and the number of solved models in each item.
In the same group, we can observe that with our exact separation algorithm, the models with more facilities are easier to be solved than those with fewer facilities. 
However, the same behavior cannot be observed in {the computational results of} CPX.  
This shows that compared with problems with a single facility, our exact separation algorithm works better in problems with more facilities.

\begin{figure}[!htbp]
	\centering
	\includegraphics[width=9cm]{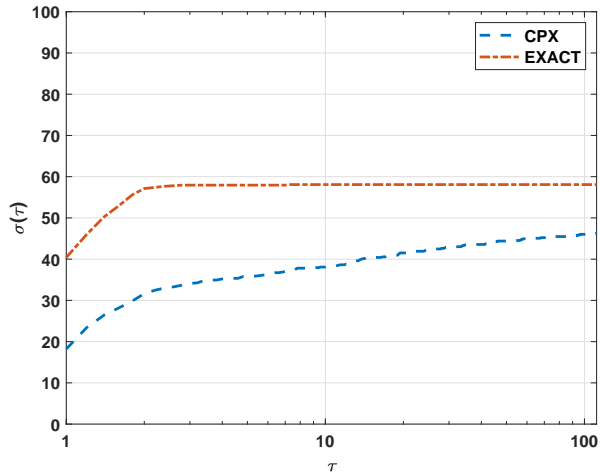}
	\caption{Performance profile for testsets NDP1, NDP2, and NDP3.}
	\label{fig:Performanceprofile}
\end{figure}

To end of this subsection, we plot the performance profiles of CPX and EXACT in Fig. \ref{fig:Performanceprofile} to further compare their performance.
For each model in the testsets NDP1, NDP2, and NDP3, we compute a factor $\tau$ as the ratio of the running time to solve to optimality of the considered setting to the minimum running time of two settings CPX and EXACT.
Each point $(\tau, \sigma)$ of the curves in Fig. \ref{fig:Performanceprofile} represents that in $\sigma$ percentage of the models, this particular setting is at most $\tau$ times slower than the {faster setting}; for more details, see \cite{Dolan2002}.
Consequently, the higher the curve is, the better the setting performs.
Fig. \ref{fig:Performanceprofile} clearly shows that EXACT performs much better than CPX.  
In particular, EXACT is able to solve $58.1\%$ of the models to optimality while CPX is only able to solve $46.3\%$ of the models to optimality.

\subsection{Comparison with c-strong inequality \cite{Brockmuller1996,Brockmuller2004}}
According to \cite{atamturk2017multi}, the c-strong inequality \cite{Brockmuller1996,Brockmuller2004} is quite effective in solving the unsplittable capacitated network design problem.
Other proposed inequalities \cite{Atamturk2002,vanHoesel2002} can provide additional improvement but the marginal effect on top of the c-strong inequality is limited. 
Therefore, we only compare the performance effects of the inequality generated by the exact separation procedure (EXACT) with the c-strong inequality (CSTRONG). 
Since the c-strong inequality can only be applied in the case of one facility or two facilities with divisible capacities, we only conduct experiments on testset NDP1. 
The results are reported in Table \ref{cstrong-exact-all}.

\begin{table}[!htbp]
		\centering
	\setlength{\tabcolsep}{3pt} 
	\renewcommand{\arraystretch}{1.2} 
	{\small
		\centering
		\caption{Performance comparison with the c-strong inequalities on testset NDP1.}
		\begin{tabular}{|l|c|ccc|ccccc|}
			\hline
			\multirow{2}{1cm}{Bracket}	&	\multirow{2}{0.6cm}{total}	&	
			\multicolumn{3}{c|}{CSTRONG}					&	
			\multicolumn{5}{c|}{EXACT}									
			\\\cline{3-10}
				&		&	solved	&	nodes	&	time	&	solved	&	
				nodes	&	time	&	faster	&	slower	\\\hline
			all	&	200	&	147	&	1232	&	31	&	151	&	1033	&	28	&	53	&	67	\\
			$[1,7200]$	&	130	&	125	&	2281	&	41	&	129	&	1883	&	38	&	45	&	62	\\
			$[10,7200]$	&	60	&	55	&	31508	&	275	&	59	&	25637	&	232	&	27	&	22	\\
			$[100,7200]$	&	32	&	27	&	366689	&	2374	&	31	&	222227	&	1585	&	20	&	4	\\
			$[1000,7200]$	&	22	&	17	&	579321	&	4725	&	21	&	377860	&	3246	&	13	&	3	\\\hline
		\end{tabular}
		\label{cstrong-exact-all}
	}
\end{table}
Table \ref{cstrong-exact-all} shows that the performance of EXACT is better than that of CSTRONG, especially on the hard models. In total, EXACT solves $ 4 $ more models than CSTRONG. 
This shows that even compared with the existing polyhedral studies on the unsplittable flow arc-set polyhedron, our exact separation algorithm is more effective in solving {the} unsplittable capacitated network design problem.

\subsection{Performance effect {of} using Propositions \ref{propcase1}-\ref{propcase4}}

{We now report the performance effect of Propositions \ref{propcase1}-\ref{propcase4} in our procedure in Table \ref{easycalls}.} The average results over all the models in each testset are presented.
Compared with EXACT, {NOPROS} refers to the setting of calling the row generation subroutine to solve the separation problem \eqref{lowdimLP} even if one of the conditions in Propositions \ref{propcase1}-\ref{propcase4} is satisfied.
For each setting, we report the running time spent in the
row generation subroutine in column ``rgtime".
For EXACT, we additionally report the numbers of times that fulfilling the conditions in Propositions \ref{propcase1}-\ref{propcase4} in ``p5"-``p8", respectively. 
 In column ``ncalls", we report the number of calling the exact separation algorithm. In column ``rate'', we list the successful rate computed by the number of times fulfilling at least one of the conditions in Propositions \ref{propcase1}-\ref{propcase4} over the number of calling the exact separation algorithm.
\begin{table}[ht]
	\centering
	\setlength{\tabcolsep}{3pt} 
	\renewcommand{\arraystretch}{1.2} 
	\centering
	\small 
	\caption{Performance effect {of} using Propositions \ref{propcase1}-\ref{propcase4}.}
	\begin{tabular}{|c|c|ccccccc|}
		\hline
		\multirow{2}{1cm}{Testset}& \multicolumn{1}{c|}{NOPROS} & 
		\multicolumn{7}{c|}{EXACT} \\\cline{2-9}
		& rgtime & rgtime & p5 & p6 & 
		p7 & p8 & ncalls & rate  \\ 
		\hline
		{NDP1} &1.21&0.80 &255& 543 & 0 & 0 &  1158 &  66.42\% \\
		{NDP2} &102.91 & 91.10 & 183	 & 151	& 10& 411& 1361&  31.72\% \\
		{NDP3} &2.90 & 2.57 & 425	 & 1020	& 5	& 19& 3182& 46.17\% \\
			\hline
	\end{tabular}\label{easycalls}
\end{table}

As it can be seen in Table \ref{easycalls}, using Propositions \ref{propcase1}-\ref{propcase4}, the running time of the separation algorithm decreases considerably. 
This is due to the fact that, among the total number of the calls of the exact separation algorithm, 66.42\%, 31.72\%, and 46.17\% of them can be computed directly using 
Propositions \ref{propcase1}-\ref{propcase4} for testsets NDP1, NDP2, and NDP3 without calling the time-consuming row generation subroutine.
Since the models in testset NDP1 contain only a single facility, the conditions in Propositions \ref{propcase1} and \ref{propcase2} occur frequently while those in Propositions \ref{propcase3} and \ref{propcase4} never occur.
Compared with those in Propositions \ref{propcase1} and \ref{propcase2}, the conditions in Propositions \ref{propcase3} and \ref{propcase4} occur less frequently.
This is not surprising since the conditions in Propositions  \ref{propcase3} and \ref{propcase4} are much stricter than those in Propositions \ref{propcase1} and \ref{propcase2}.
Finally, we observe that the reduction on the running time is not consistent with that of the successful rate. This is because, using the row generation subroutine, the separation problems that fulfill one of the conditions in Propositions \ref{propcase1}-\ref{propcase4} are easier to be solved than those that do not fulfill any of them.

\subsection{Performance effect of using Algorithm \ref{acceleration}}
Finally, we report the performance effect of employing Algorithm \ref{acceleration} in the exact separation procedure.
Compared with EXACT, NOALG2 refers to the setting of the exact separation procedure 
without using Algorithm \ref{acceleration}.
Notice that this may lead to a weaker constraint \eqref{cons2} in the row generation subroutine.
For each setting, we report the running time of the row generation subroutine and the average iteration of a row generation call in columns ``rgtime"  and ``iter", respectively. We present the average results among all the models.  
\begin{table}[ht]
	\centering
	\setlength{\tabcolsep}{3pt} 
	\renewcommand{\arraystretch}{1.2} 
	\centering
	\small 
	\caption{Performance effect of using Algorithm \ref{acceleration}.}
	\begin{tabular}{|c|cc|cc|}
		\hline
		\multirow{2}{1cm}{Testset}& \multicolumn{2}{c|}{NOALG2} & 
		\multicolumn{2}{c|}{EXACT} \\\cline{2-5}
		& rgtime & iter & rgtime & iter  \\ 
		\hline
		{NDP1} &1.42&8.87 &0.80& 4.63  \\
		{NDP2} &144.20& 61.62 & 91.10 & 21.83 \\
		{NDP3} &3.92& 9.22 & 2.57 & 6.36 \\
		\hline
	\end{tabular}\label{acc}
\end{table}

As it can be seen in Table \ref{acc}, with Algorithm \ref{acceleration}, the iteration of row generation subroutine per call decreases significantly, which in turn, reduces 43.66\%, 36.77\%, and 34.44\% of the row generation time in testsets NDP1, NDP2, and NDP3, respectively. This confirms that our proposed algorithm indeed works well in practice. Notice that it is reasonable to observe reductions on the average iteration exceeds reduction on the running time since Algorithm \ref{acceleration} also leads to a denser constraint for the linear programming problem \eqref{partialLP1}.

\section{Conclusion and future work} \label{sect:conclusionandfuturework}
In this paper, we have considered the separation problem of the flow arc-set polyhedron in the unsplittable capacitated network design problem.
By solving the separation problem, we generated the facet-defining inequality for the considered polyhedron.
We showed that in some special cases, a closed form of the separation problem can be derived.
For the general case, we used the exact separation algorithm to solve the separation problem.
Moreover, a new technique was proposed to reduce the computational time in the row generation subroutine of the exact separation algorithm.
The numerical experiments showed the effectiveness of the exact separation algorithm in solving the unsplittable capacitated network design problem and the advantage of the proposed technique in reducing the exact separation time.

There still exist some ideas to be explored in this study. For the flow arc-set polyhedron, we have proposed a new technique to speed up the row generation subroutine; see Algorithm \ref{acceleration}. It deserves to test whether or not the same improvement can be observed for the knapsack polyhedron. In this study, we only implemented the exact separation algorithm on the flow arc-set polyhedron, but it can be extended on the cut-set polyhedron, see for example \cite{Achterberg2010}. We are currently conducting this topic to see the performance effect of generalizing this to the cut-set polyhedron.

\bibliographystyle{spmpsci}      
\bibliography{exactseparation}   

\newpage
\section*{Appendix}

\begin{lemma}
	\label{lemma2}
	Suppose that $T=\{1\}$ and inequality \eqref{facet} with $\beta_1 = 1$ is a facet-defining {inequality} of polyhedron $ P $.
	For each $ q \in Q $, if $ a_q \leq b_1  $, then $ 0 \leq \alpha_q \leq 
	1 $.
\end{lemma}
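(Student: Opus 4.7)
The claim is essentially an immediate corollary of Proposition \ref{facettheorem}(i), so the plan is to invoke that result and exploit the specific values $T = \{1\}$ and $\beta_1 = 1$, together with the hypothesis $a_q \leq b_1$ and the standing assumption $a_q > 0$.

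The plan is to proceed in three short steps. First, I would apply Proposition \ref{facettheorem}(i) with $t = 1$ to the facet-defining inequality \eqref{facet}, which directly yields the lower bound $\alpha_q \geq 0$. Second, the same proposition gives the upper estimate $\alpha_q \leq \lceil a_q/b_1 \rceil \beta_1$; since the hypothesis stipulates $\beta_1 = 1$, this collapses to $\alpha_q \leq \lceil a_q/b_1 \rceil$. Third, I would compute this ceiling explicitly: the standing assumption $a_q > 0$ combined with $a_q \leq b_1$ gives $0 < a_q/b_1 \leq 1$, so $\lceil a_q/b_1 \rceil = 1$, yielding $\alpha_q \leq 1$.

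There is essentially no obstacle here, since the proof is a direct specialization of Proposition \ref{facettheorem}(i) to the single-facility setting with the normalization $\beta_1 = 1$. The only minor point to be careful about is ensuring the strict positivity $a_q > 0$ is used to rule out the degenerate case $a_q = 0$ that would otherwise give $\lceil a_q/b_1 \rceil = 0$; this is guaranteed by the blanket assumption stated at the end of the introduction.
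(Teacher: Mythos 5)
Your proposal is correct and follows exactly the same route as the paper, which simply cites Proposition \ref{facettheorem} together with $a_q \leq b_1$ and $\beta_1 = 1$; you have merely spelled out the specialization $t=1$ and the computation $\lceil a_q/b_1\rceil = 1$ (using $a_q>0$) that the paper leaves implicit.
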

\begin{proof}
	Combining with $a_q\leq b_1$, $\beta_1 = 1$, and Proposition \ref{facettheorem},
	we have the statement.\qed
\end{proof}

\begin{lemma} \label{lpclosedform1}
 	Given $ (\x, \y_1) \in {[0,1]}^{|Q|} \times \mathbb{R}_+ $, the linear programming problem
 	\begin{align}                                                                                           
 	&  \max_{\alpha,\gamma} \left\{\sum_{q \in Q} \x_q \alpha_q- \y_1 - \gamma: -r-\gamma\leq                   
 	0, \right. \nonumber \\                                                                                 
 	& \qquad\qquad\qquad   \left.\sum_{q \in Q}\alpha_q - (r+1)  -\gamma \leq  0,~0\leq  \alpha_q \leq 1, ~\forall~ q \in Q  \vphantom{\sum_{q \in Q} \x_q \alpha_q}\right\} \label{easycases}     
 	\end{align}
 	has an optimal solution $ (\boldsymbol{e^d}, -r) $ where $r$ and $ d $ are defined in \eqref{rdef} and \eqref{ddef}, respectively.
\end{lemma}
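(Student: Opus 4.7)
The strategy is to verify that $(\boldsymbol{e^d}, -r)$ is primal feasible and then certify its optimality by a weighted aggregation of the two linear constraints, which is exactly the weak-duality certificate of a matching dual solution.

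First I would substitute $\alpha = \boldsymbol{e^d}$, $\gamma = -r$ into the constraints. The bound $-r - \gamma \leq 0$ reads $0\leq 0$; the bound $\sum_{q}\alpha_q - (r+1) - \gamma\leq 0$ also reads $1 - (r+1) + r = 0 \leq 0$; and $\boldsymbol{e^d} \in \{0,1\}^{|Q|} \subseteq [0,1]^{|Q|}$ satisfies the box constraints. So this point is feasible, with objective value $\x_d - \y_1 + r$.

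For optimality, I would take any feasible $(\alpha,\gamma)$ and form the convex combination of the two linear constraints with weights $1-\x_d\ge 0$ and $\x_d\ge 0$, both of which are nonnegative because the hypothesis $\x \in [0,1]^{|Q|}$ gives $\x_d \in [0,1]$. Since the coefficients of $\gamma$ are $-1$ in each and the weights sum to $1$, the combination telescopes to
$$
-\gamma + \x_d \sum_{q\in Q} \alpha_q \;\leq\; (1-\x_d)\,r + \x_d\,(r+1) \;=\; r + \x_d.
$$
By the definition $d\in\argmax_q\{\x_q\}$ and the nonnegativity of each $\alpha_q$, we have $\sum_{q\in Q}\x_q\alpha_q \leq \x_d \sum_{q\in Q}\alpha_q$, so the left-hand side dominates the objective $\sum_{q\in Q}\x_q\alpha_q - \gamma$. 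This yields the upper bound $r + \x_d$, which coincides with the value attained by $(\boldsymbol{e^d}, -r)$ (after subtracting the $\y_1$ constant), proving optimality.

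The only technical point worth flagging is the sign of the aggregation weights: the argument hinges on having both $\x_d\ge 0$ and $1-\x_d\ge 0$, which is immediate from the hypothesis $\x\in[0,1]^{|Q|}$. I do not anticipate any further obstacle, since the variable $\y_1$ enters only as an additive constant in the objective and plays no role in the feasibility or the aggregation.
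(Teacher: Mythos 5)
Your proposal is correct and is essentially the paper's own argument: the paper certifies optimality by the chain $\sum_q \x_q\alpha_q \le \x_d\sum_q\alpha_q \le \x_d[\gamma+(r+1)]$ followed by $-(1-\x_d)\gamma\le(1-\x_d)r$, which is precisely your aggregation of the two constraints with weights $\x_d$ and $1-\x_d$. The only difference is presentational — you phrase it as an explicit weak-duality certificate rather than a chain of substitutions.
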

\begin{proof}
	By simple calculation, point $(\boldsymbol{e^d}, -r)$ is a 
	feasible solution of problem \eqref{easycases} with the objective 
	value being $ \x_d -\y_1 +r $. It is optimal since
	$$\begin{aligned}
	&~\sum_{q \in Q} \x_q \alpha_q-\y_1 - \gamma &&\\
	\leq &~  \x_d \sum_{q \in 
		Q}\alpha_q -\y_1 -\gamma  &&(\text{from}~\alpha_q \geq 0~{\text{and the definition of }~d~\text{in}~\eqref{ddef}})\\
	\leq&~  \x_d[\gamma + (r + 1)] -\y_1 -\gamma && (\text{from 
		$ \sum_{q \in Q}\alpha_q - (r+1)  -\gamma \leq  0 $ in problem \eqref{easycases}}) \\
	= &~\x_d + \x_dr  - (1-\x_d)\gamma - \y_1 &&\\
	\leq&~ \x_d + \x_dr  + (1-\x_d)r - \y_1 &&(\text{from $ -r  -\gamma\leq 0 $ 
		in problem \eqref{easycases}}) \\
	= &~ \x_d -\y_1 +r.&& \qquad\qquad\qquad\qquad\xqedhere{118.5pt}{\qed} 
	\end{aligned}  $$
\end{proof}

\section*{Proof of Proposition \ref{propcase1}}
\begin{proof}
	From the definition of $r$ in \eqref{rdef}, we have $ (\boldsymbol 0,r) \in 
	X $. This, combined with assumption (ii) and condition  
	\eqref{first}, implies that
	$$X = \big\{(\boldsymbol 0,r)\big \} \cup \big \{(x,k) \in \{0,1\}^{|Q|} \times \mathbb{Z}_+ \,:\, k \geq r+1\big \}.$$
	It is obvious  that the vertices of polyhedron $ P $, i.e., $\conv(X)$, are points
	$(\boldsymbol 0,r)$ and $(x,r+1)$ for all $x\in \{0,1\}^{|Q|}$ with 
	$x\neq \boldsymbol0$. Particularly, in problem \eqref{seppromain}, the vertex $ (\boldsymbol{e},r+1) $ 
	corresponds to the constraint
	\[\sum_{q \in Q}\alpha_q-(r+1)-\gamma \leq 0.\label{allecons}\]
	By removing the constraints that are dominated by \eqref{allecons} and using 
	Lemma \ref{lemma2} and Theorem \ref{solisfacet}, problem \eqref{seppromain} is further equivalent to problem \eqref{easycases}.
	Hence, by Lemma \ref{lpclosedform1}, in this case, point $(\boldsymbol{e^d}, -r)$ is optimal for problem \eqref{seppromain}, which corresponds to the  inequality $x_d 
	\leq y_1 -r$ of polyhedron $ P $.
	Furthermore, inequality $x_d \leq y_1 -r$ is facet-defining for polyhedron $ P $ since the $|Q|+1$ affinely independent points $(\boldsymbol{0},r)$, $(\boldsymbol{e^d},r+1)$,  and $(\boldsymbol{e^d} + \boldsymbol{e^q} ,r+1)$ for each $q\in Q\backslash \{d\}$ are on the face $\{(x,y_1)\in P: x_d = y_1-r\}$. \qed
\end{proof}

\begin{lemma} \label{lpclosedform2}
	Let $ r $ and $d$ be the values defined in \eqref{rdef} and \eqref{ddef}, respectively, and $ (\x, \y_1) \in {[0,1]}^{|Q|} \times \mathbb{R}_+ $. Consider the following linear programming problem
	\begin{align}
	&  \max_{\alpha,\gamma} \left\{\sum_{q \in Q} \x_q \alpha_q- \y_1 - \gamma: -r-\gamma\leq 
	0, \right. \nonumber \\
	& \qquad\left.\sum_{q\in Q\backslash\{\q\} }\alpha_q - (r+1)  -\gamma \leq  0,\ \forall ~\q \in 
	Q, ~0\leq  \alpha_q \leq 1, ~\forall~ q \in Q  \vphantom{\sum_{q \in Q} \x_q \alpha_q}\right\} \label{easycases2}.
	\end{align}	
	The following results hold.
	\begin{enumerate}
		\item 
		[ {\rm(a)}]  If $ |Q| \leq 2$,  one of the optimal solutions of problem \eqref{seppromain} is $(\boldsymbol{e}, -r)$;
		\item 
		[{\rm(b)}] If $ |Q| \geq 3 $, one of the optimal solutions of problem \eqref{seppromain} is
				$$\left\{\begin{aligned}
				&(\boldsymbol{e^{d}}, -r), && \text{if}\  \frac{\sum_{q\in Q}\x_q}{|Q|-1} \leq 
				\x_d;\\
				&(\frac{1}{|Q|-1}\boldsymbol{e}, -r), && \text{if}\ \x_d < 
				\frac{\sum_{q\in Q}\x_q}{|Q|-1}\leq 1;\\
				&(\boldsymbol{e},-r+|Q|-2),\ &&\text{if}\ \frac{\sum_{q\in 
				Q}\x_q}{|Q|-1}> 1.
				\end{aligned} \right.$$ 
	\end{enumerate}
\end{lemma}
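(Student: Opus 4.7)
The plan is to certify optimality of each claimed solution by LP duality. For part (a) with $|Q|\leq 2$, each row constraint in \eqref{easycases2} involves at most one $\alpha_q$, so setting $\gamma=-r$ reduces every row constraint to the bound $\alpha_q\leq 1$, making $(\boldsymbol{e},-r)$ feasible. Since $\bar{x}_q\geq 0$ for all $q\in Q$, any feasible $(\alpha,\gamma)$ satisfies $\sum_{q\in Q}\bar{x}_q\alpha_q\leq \sum_{q\in Q}\bar{x}_q$ and $\gamma\geq -r$, so the objective is bounded above by $\sum_{q\in Q}\bar{x}_q -\bar{y}_1 + r$, which is attained at $(\boldsymbol{e},-r)$. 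This settles part (a).

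For part (b) with $|Q|\geq 3$, I would first write down the LP dual of \eqref{easycases2}. Introducing nonnegative multipliers $\lambda,\mu_{\bar{q}},\nu_q,\xi_q$ for the constraints $-r-\gamma\leq 0$, the $\bar{q}$-th row constraint, $\alpha_q\leq 1$ and $-\alpha_q\leq 0$ respectively, the dual reads
\begin{equation*}
\min\Bigl\{ r\lambda + (r+1)\sum_{\bar{q}\in Q}\mu_{\bar{q}} + \sum_{q\in Q}\nu_q - \bar{y}_1 :\ \lambda + \sum_{\bar{q}\in Q}\mu_{\bar{q}} = 1,\ \sum_{\bar{q}\neq q}\mu_{\bar{q}} + \nu_q - \xi_q = \bar{x}_q\ \forall q\in Q \Bigr\}.
\end{equation*}
In each subcase I would verify primal feasibility (at $\gamma=-r$ in Cases~1--2 and at $\gamma=|Q|-2-r$ in Case~3, the relevant row constraints are automatically saturated) and then exhibit a dual solution with the same objective value.

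The dual constructions are as follows. For Case~1, where $\sum_q\bar{x}_q\leq (|Q|-1)\bar{x}_d$, take $\lambda=1-\bar{x}_d$, $\mu_d=0$, $\mu_q=\theta(\bar{x}_d-\bar{x}_q)$ for $q\neq d$ with $\theta=\bar{x}_d/(|Q|\bar{x}_d-\sum_q\bar{x}_q)$, $\nu_q=0$, and let $\xi_q$ be forced by the dual equation. For Case~2, where $\bar{x}_d<\sum_q\bar{x}_q/(|Q|-1)\leq 1$, take $\lambda=1-\sum_q\bar{x}_q/(|Q|-1)$, $\mu_q=1-\lambda-\bar{x}_q$ for all $q$, and $\nu_q=\xi_q=0$. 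For Case~3, where $\sum_q\bar{x}_q/(|Q|-1)>1$, take $\lambda=0$, $c=(1-|Q|+\sum_q\bar{x}_q)/|Q|$, $\mu_q=1-\bar{x}_q+c$, $\nu_q=c$, $\xi_q=0$. In each case, nonnegativity of every dual variable is precisely the case hypothesis, and a direct calculation shows the dual objective equals the primal value of the candidate, so LP duality certifies optimality.

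The main obstacle is Case~1, where the asymmetric role of the index $d$ precludes a symmetric ansatz for $\mu$. The nonuniform choice $\mu_q\propto \bar{x}_d-\bar{x}_q$ is the one that makes the dual equation $\sum_{\bar{q}\neq q}\mu_{\bar{q}}+\nu_q-\xi_q=\bar{x}_q$ solvable with all multipliers nonnegative, and one must check carefully that the case hypothesis $\sum_q\bar{x}_q\leq (|Q|-1)\bar{x}_d$ is exactly what forces $\theta\in[0,1]$ and $\xi_q=\bar{x}_d-\mu_q-\bar{x}_q\geq 0$. Once this ansatz is in place, Cases~2 and~3 are essentially dictated by complementary slackness and reduce to straightforward algebra.
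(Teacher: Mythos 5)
Your proposal is correct, but it proves part (b) by a genuinely different route than the paper. The paper argues primally: it notes that optimality forces $\gamma$ to be pinned either by $-r-\gamma\le 0$ or by one of the row constraints, eliminates $\gamma$ in each case, and uses the count of active constraints at a basic optimal solution to reduce to the three candidates $(\boldsymbol{e^d},-r)$, $(\frac{1}{|Q|-1}\boldsymbol{e},-r)$, $(\boldsymbol{e},-r+|Q|-2)$, finally comparing their objective values to decide which wins in which regime. You instead certify each candidate in its regime by an explicit dual feasible point with matching objective value; I checked your multipliers and they work: in Case~1 the normalization $\lambda+\sum_{\bar q}\mu_{\bar q}=1$ forces $\sum_{\bar q\ne d}\mu_{\bar q}=\bar x_d$, hence $\theta=\bar x_d/(|Q|\bar x_d-\sum_q\bar x_q)$, and the hypothesis $\sum_q\bar x_q\le(|Q|-1)\bar x_d$ is exactly $\theta\le 1$, i.e. $\xi_q=(1-\theta)(\bar x_d-\bar x_q)\ge 0$; Cases~2 and~3 verify as you describe, with dual objectives $r+\sum_q\bar x_q/(|Q|-1)-\bar y_1$ and $\sum_q\bar x_q-\bar y_1+r-|Q|+2$ respectively. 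The trade-off: the paper's enumeration of basic solutions is what \emph{discovers} the candidate list (and the comparison step is where the three case conditions come from), whereas your duality argument presupposes the candidates but then certifies them more rigorously, replacing the paper's ``it can be easily verified'' and ``by a simple analysis'' steps with checkable algebra. Two minor points to tidy up: in Case~1 the degenerate situation $\bar x_d=0$ (all $\bar x_q=0$, so your $\theta$ is $0/0$) needs the trivial certificate $\lambda=1$, $\mu=\nu=\xi=0$; and the statement's reference to problem \eqref{seppromain} is evidently a typo for \eqref{easycases2} --- your reading is the intended one, consistent with how the lemma is used in the proof of Proposition~\ref{propcase2}.
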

\begin{proof}
	 If $|Q| \leq  2$, by removing the redundant constraints that are dominated by $ -r -\gamma \leq 0 $ and $ \alpha_q \leq 1$, $ q \in Q $, problem \eqref{easycases2} is equivalent to
	$$\d\max_{\alpha,\gamma} \left \{ \sum_{q \in Q} \x_q \alpha_q- \y_1 - \gamma: -r-\gamma\leq 
	0,~ 0\leq  \alpha_q \leq 1, ~\forall~  q \in Q \right \}.$$ 
	Clearly, $(\boldsymbol{e}, -r)$ is an optimal solution. This proves case (a) in the statement.
	
	Next, we consider the case $|Q|\geq 3$.
	Since the coefficient of $\gamma$ in the objective function in problem
	\eqref{easycases2} is $-1$, 
	optimality of problem \eqref{easycases2} requires that $\gamma$ must be equal to $-r$, or $ \sum_{q\in Q\backslash\{q'\}}\alpha_q - (r+1)$ for some $q' \in Q $. 
	We have the following two cases.
	\begin{itemize}
		\item[1)] 
		$\gamma = -r $. 
		By eliminating the variable $\gamma$, problem \eqref{easycases2} reduces to
		\begin{align*}
		\max_{\alpha}\left\{\sum_{q \in Q} \x_q\alpha_q- \y_1 +r: \sum_{q\in Q\backslash\{\q\}}\alpha_q \leq 1,~\forall~ \q \in Q, ~ 0\leq  \alpha_q \leq 1,~  \forall~  q \in Q \right\}.   
		\end{align*}
		From the linear programming theory, there are at least $|Q|$ 
		constraints in the above problem being active at the basic 
		optimal solution. 
		\begin{itemize}
			\item[1.1)] If $\sum_{q\in Q\backslash\{\q\}}\alpha_q = 1$ for all $ \q \in 
			Q $, we have $\alpha_q =\frac{1}{|Q|-1}$ for all $ q \in Q$, and 
			$(\frac{1}{|Q|-1}\boldsymbol e, -r)$ is an optimal solution of 
			problem \eqref{easycases2}.
			\item[1.2)] Otherwise, we have $ \alpha_{q'} = 0 $ or  $ \alpha_{q'} = 1 $ for some $ q' \in Q $. In any case, there must exist some $ q'' \in Q $ such that $ \alpha_{q''} = 0 $. Together with $ \sum_{q\in Q\backslash\{q''\}} \alpha_q \leq 1 $, it can be easily verified that $(\boldsymbol{e^d}, -r)$ is an optimal solution of problem \eqref{easycases2}.
		\end{itemize}
		\item[2)] 
		$\gamma =  \sum_{q\in Q\backslash\{q'\}}\alpha_q - (r+1) $ for some $ q'\in Q $. 
		By eliminating the variable $\gamma$, problem \eqref{easycases2}  
		reduces to
		\begin{align}
		& \max_{\alpha} \left\{-\sum_{q\in Q\backslash\{q'\}}(1-\x_q)\alpha_q+\x_{q'}\alpha_{q'} - 
		\y_1   + ( r+1):\sum_{q\in Q\backslash\{q'\}}\alpha_q \geq 1, \right.  \nonumber \\
		& \qquad \qquad\qquad\alpha_{q} \geq \alpha_{q'},\ \forall ~q\in Q \backslash\{q'\}, ~0\leq  \alpha_q \leq 1, ~\forall~ q \in Q  \left.\vphantom{\sum_{q \in Q} \x_q \alpha_q}\right\} 	\label{eq1}. 
		\end{align}
		\begin{itemize}
			\item[2.1)] If $\alpha_{q'} = 0$, the constraints $\alpha_{q} \geq 
			\alpha_{q'},\ q \in Q \backslash \{q'\}$, are redundant. 
			This implies that $(\boldsymbol{e^{d'}}, -r)$ is an optimal solution of problem \eqref{eq1} where $d' \in \argmax_{q\in Q \backslash \{q'\}}\{ \x_q \}$.  
			We note that by the definition of $d$ in \eqref{ddef}, the objective value of problem \eqref{easycases2} at point $(\boldsymbol{e^{d'}}, -r)$ cannot be better than that at point  $(\boldsymbol{e^d}, -r)$.
			\item[2.2)]  If $\alpha_{q'} = 1$, we have $\alpha_q=1,$ for all 
			$q\in Q\backslash\{q'\}$.
			Then $(\boldsymbol e, -r+|Q|-2)$ is an optimal solution of problem \eqref{eq1}.
			\item[2.3)] If $0 < \alpha_{q'} < 1$, we have $\alpha_q \geq \alpha_{q'} > 0$ for all $q \in Q \backslash \{q'\}$.
			 There exists a basic optimal solution such that at least $|Q|$ constraints in problem \eqref{eq1} are active.
			By a simple analysis, the active constraints must be $\sum_{q\in Q\backslash\{q'\}}\alpha_q = 1 $ and 
			$\alpha_{q}=\alpha_{q'}$ for all $q\in Q\backslash\{q'\}$. This implies $\alpha_q 
			=\frac{1}{|Q|-1}$ for all  $q \in Q$, and hence
			$(\frac{1}{|Q|-1}\boldsymbol e, -r)$ is an optimal solution of 
			problem \eqref{easycases2}.
		\end{itemize}
	In summary, for problem \eqref{easycases2}, there are three potentially optimal solutions $(\boldsymbol{e^d}, -r)$, 
	$(\frac{1}{|Q|-1}\boldsymbol e, -r)$, and $(\boldsymbol e, -r+|Q|-2)$ with the objective value $ \x_d - \y_1 +r $, $\frac{1}{|Q| -1} \sum_{q \in Q}\x_q - \y_1 + r$, and $  \sum_{q \in Q}\x_q - \y_1 + r  - |Q| + 2 $, respectively. Finally, comparing these three values, we have case (b) in the statement. This completes the proof. \qed
	\end{itemize}
\end{proof}

\section*{Proof of Proposition \ref{propcase2}}
\begin{proof}
	From the definition of $r$ in \eqref{rdef}, we have $ (\boldsymbol 0,r) \in 
	X $. This, together with assumption {\rm(ii)} and condition \eqref{third}, 
	implies that
	$$\begin{aligned}
	X = & \big\{(\boldsymbol 0,r)\big\}\cup \big\{(x, k) \in \{0,1\}^{|Q|} \times \mathbb{Z}_+\,: \,x\neq \boldsymbol e,~k=r+1\big \}\\
	& \qquad\qquad\qquad\qquad \cup \big \{(x,k)\in \{0,1\}^{|Q|} \times \mathbb{Z}_+\,:\, k\geq r +2 \big \}.
	\end{aligned}$$
	The vertices of {polyhedron} $ P $, i.e., $ \conv(X) $, are $(\boldsymbol 0,r)$, $(x,r+1)$  for all $x\in \{0,1\}^{|Q|}$ with $x\neq \boldsymbol 0$ and $x\neq \boldsymbol e$, and $(\boldsymbol e,r+2)$. 
	Similar to the proof in Proposition \ref{propcase1}, by removing redundant 
	constraints and using Lemma \ref{lemma2} and Theorem \ref{solisfacet}, problem \eqref{seppromain} reduces 
	to problem \eqref{easycases2}.
	Hence, if $|Q| \leq  2$, by Lemma \ref{lpclosedform2}, the optimal solution $(\boldsymbol{e}, -r)$ of problem \eqref{easycases2} corresponds to the inequality $\sum_{q \in Q} x_q \leq y_1-r$ of polyhedron $ P $.
	The associated face $\{(x,y)\in P: \sum_{q \in Q} x_q= y_1-r\}$ contains $ |Q| + 1 $ affinely independent points: $(\boldsymbol{0},r)$ and $(\boldsymbol{e^q},r+1)$ for each $ q \in Q $, which shows that inequality $\sum_{q \in Q} x_q \leq y_1-r$ defines a facet of polyhedron $ P $. This proves case (a) in the statement.
	
	Analogously, if $|Q| \geq  3$, by Lemma \ref{lpclosedform2}, points $(\boldsymbol{e^d}, -r)$, 
	$(\frac{1}{|Q|-1}\boldsymbol e, -r)$, and $(\boldsymbol e, -r +|Q|-2)$ are three potentially optimal solutions for problem \eqref{easycases2} which correspond to inequalities $x_{d}\leq  y_1-r$, $\frac{1}{|Q|-1}\sum_{q \in Q} x_q\leq y_1 -r$, and $\sum_{q\in Q}x_q\leq  y_1-r+|Q|-2$, respectively. To prove that each of the three inequalities defines a facet of polyhedron $P$, we list the $|Q|+1$ affinely independent points in polyhedron $ P $ fulfilling them at equality in the following.
\begin{table}[ht]
	\centering
	\setlength{\tabcolsep}{3pt} 
	\renewcommand{\arraystretch}{2} 
	\centering
	\small 
	\begin{tabular}{|l|l|}
		\hline
		$x_{d}\leq  y_1-r$ & $(\boldsymbol{0}, r)$, $(\boldsymbol{e^d}, r+1)$, $(\boldsymbol{e^d}+\boldsymbol{e^q}, r+1)$ for each  $q\in Q\backslash\{d\}$\\
		\hline
		$\frac{1}{|Q|-1}\sum_{q \in Q} x_q\leq y_1 -r$& $(\boldsymbol{0}, r)$, $(\boldsymbol{e}-\boldsymbol{e^q}, r+1)$ for each  $q\in Q$\\
		\hline
		$\sum_{q\in Q}x_q\leq  y_1-r+|Q|-2$ &  $(\boldsymbol{e}-\boldsymbol{e^q}, r+1)$ for each $q\in Q$, $(\boldsymbol{e}, r+2)$\\
		\hline
	\end{tabular}
\end{table}

\noindent Thus, we have case (b) in the statement. This completes the proof.
	\qed
\end{proof}

\section*{Proof of Proposition \ref{propcase3}}
\begin{proof}
	From the definition of $r$ in \eqref{rdef}, we have $ (\boldsymbol 0,r \boldsymbol{f^1}) \in 
	X $. Combining with assumptions {\rm(ii)}, {\rm(iii)}, and condition \eqref{first}, we can write set $X$ as:
	$$\begin{aligned}
	X = \bigg \{(\boldsymbol 0,r \boldsymbol{f^1})\bigg\}\bigcup \bigg\{(x, 
	k) \in \{0,1\}^{|Q|} \times \mathbb{Z}_+^{|T|}\,:\, k_1\geq r +1,~ \sum_{t \in T\backslash \{1\}}k_t = 0  \bigg  \} &\\
	\bigcup \bigg \{(x, 
	k) \in \{0,1\}^{|Q|} \times \mathbb{Z}_+^{|T|}\, :\, \sum_{t \in T \backslash\{1\}} k_t \geq 1\bigg \}.&
	\end{aligned}$$
	Clearly, if $r=0$, the vertices of polyhedron $P$, i.e., $\conv(X)$, are  $(\boldsymbol 0,r\boldsymbol{f^1})$, $(x,(r+1)\boldsymbol{f^1})$ 
	for all $x\in \{0,1\}^{|Q|}$ with $x\neq \boldsymbol 0$, and $(x,\boldsymbol{f^t})$ for all $x\in \{0,1\}^{|Q|}$ with $x \neq \boldsymbol{0}$ and $t\in T\backslash\{1\}$.
	If $r>0$, the additional vertices of polyhedron $P$ are $(\boldsymbol{0},\boldsymbol{f^t})$ for all $t\in T\backslash\{1\}$.
	Similar to the proof in Proposition \ref{propcase1}, by removing redundant 
	constraints and using Lemma \ref{lemma2} and Theorem \ref{solisfacet}, problem \eqref{seppromain} reduces to
	\begin{align}
	 & \max_{\alpha,\beta,\gamma} \left\{\sum_{q \in Q} \x_q \alpha_q -\y_1 -\sum_{t\in T\backslash\{1\}}\y_t\beta_t - \gamma:
	-r -\gamma\leq 0, \right.\qquad \qquad\qquad\; \nonumber \\
	& \qquad \sum_{q \in Q }\alpha_q - (r+1)  -\gamma \leq  0,~
	\sum_{q \in Q}\alpha_q-\beta_t - \gamma\leq 0,~ \forall\ t\in T\backslash\{1\}, \nonumber \\
	 & \qquad\qquad\qquad\beta_1 = 1, ~ \beta_t\geq 0,~\forall\ t\in T\backslash\{1\}, ~0\leq  \alpha_q \leq 1, ~\forall~ q \in Q   \left.\vphantom{\sum_{q \in Q} \x_q \alpha_q}\right\}. \label{easycases3}  
	\end{align}
	We now relax the bound constraints $\beta_t \geq 0$ for all $t\in T\backslash\{1\}$ in problem \eqref{easycases3}.
	Then as the objective coefficient of $\beta_t$ in problem 
	\eqref{easycases3} 
	is $-\y_t \leq 0$, we have 
	$\beta_t =  \sum_{q \in Q}\alpha_q -\gamma$ for all $t\in T\backslash\{1\}$ in the relaxation problem. 
	Substituting them into the objective function and dividing the objective function by the positive value $ 1-\sum_{t\in T\backslash\{1\}}\y_t $, we obtain an equivalent relaxation problem:
	\begin{align}
	  \max_{\alpha,\gamma} \left\{\sum_{q \in Q} \frac{\x_q- 
	  	\sum_{t\in T\backslash\{1\}}\y_t}{1-\sum_{t\in T\backslash\{1\}}\y_t} \alpha_q -
	\gamma - \frac{\y_1}{1-\sum_{t\in T\backslash\{1\}}\y_t}:
	 \right.\qquad~~~~& \nonumber \\
	 -r -\gamma\leq 0, ~\sum_{q \in Q }\alpha_q - (r+1)  -\gamma \leq  0, 
	 ~0\leq  \alpha_q \leq 1, ~\forall~ q \in Q  &\left.\vphantom{\sum_{q \in Q} \x_q \alpha_q}\right\}. \label{easycases3sub}  
	\end{align}
	If, for some $ q\in Q$, variable $ \alpha_q $'s objective coefficient  $\frac{\x_q- 
		\sum_{t\in T\backslash\{1\}}\y_t}{1-\sum_{t\in T\backslash\{1\}}\y_t} \leq 0$, then there must exist an optimal solution of problem \eqref{easycases3sub} such that $\alpha_q = 0$.
	Hence, we can remove the variables $ \alpha_q $ with nonpositive objective coefficients ($ q\in Q\backslash\tilde{Q} $ where $\tilde{Q}$ is defined in \eqref{Qtilde}) from {problem} \eqref{easycases3sub} and concentrate on the equivalent form of problem \eqref{easycases3sub}: 
	\begin{align}
	\max_{\alpha,\gamma} \left\{\sum_{q \in \tilde{Q}} \frac{\x_q- 
		\sum_{t\in T\backslash\{1\}}\y_t}{1-\sum_{t\in T\backslash\{1\}}\y_t} \alpha_q -
	\gamma - \frac{\y_1}{1-\sum_{t\in T\backslash\{1\}}\y_t}:
	\right.\qquad~~~~& \nonumber \\
	-r -\gamma\leq 0, ~\sum_{q \in \tilde{Q} }\alpha_q - (r+1)  -\gamma \leq  0, 
	~0\leq  \alpha_q \leq 1, ~\forall~ q \in \tilde{Q}  &\left.\vphantom{\sum_{q \in Q} \x_q \alpha_q}\right\}. \label{easycases3subs}  
	\end{align}
	We have two following cases. 
	\begin{itemize}
		\item[1)] $\tilde{Q} = \varnothing$. It can be easily verified that point $ (\alpha, \gamma)= (\boldsymbol{0}, -r) $ is optimal for problem \eqref{easycases3sub}. 
		Together with $ \beta_t =\sum_{q \in Q}\alpha_q -\gamma = r \geq 0 $ for all $ t \in T \backslash\{1\} $, we know that $(\boldsymbol{0}, \boldsymbol{f^1} + r\sum_{t \in T \backslash\{1\}} \boldsymbol{f^t}, -r)$ is an optimal solution for problem \eqref{easycases3} corresponding to the inequality $  0 \leq y_1 + r\sum_{t\in T \backslash\{1\}}y_t-r  $ of polyhedron $ P $.
		Moreover, the inequality defines a facet of polyhedron $ P $ since the $|Q|+|T|$ affinely independent points $(\boldsymbol{0},r\boldsymbol{f^1})$, $(\boldsymbol{0},\boldsymbol{f^t})$ for each $t\in T\backslash\{1\}$, and $(\boldsymbol{e^q},\boldsymbol{f^{t'}})$ for each $q\in Q$ and some $ t' \in T \backslash\{1\} $, are on the face $\{(x,y)\in P: 0 = y_1 + r\sum_{t\in T \backslash\{1\}}y_t-r\}$.
		\item[2)] $\tilde{Q} \neq \varnothing$.
		Notice that problem \eqref{easycases3sub} are a form of problem \eqref{easycases} and hence by Lemma \ref{lpclosedform1}, point $ (\alpha, \gamma) = (\boldsymbol{e^d}, -r) $ is optimal for problem \eqref{easycases3sub} where $ d $ is defined in \eqref{ddef}. 
		Furthermore, for all $ t \in T$, we have $ \beta_t  =\sum_{q \in Q}\alpha_q -\gamma =  r+1 \geq 0  $ showing that $(\boldsymbol{e^{d}}, \boldsymbol{f^1} + (r+1)\sum_{t \in T \backslash\{1\}} \boldsymbol{f^t}, -r)$ is an optimal solution of problem \eqref{easycases3} which corresponds to the inequality $  x_d \leq y_1 + (r+1)\sum_{t\in T \backslash\{1\}}y_t-r  $ for $ P $.
		Finally, the $|Q|+|T|$ affinely independent points 
		$(\boldsymbol{0},r\boldsymbol{f^1})$, $(\boldsymbol{e^d},(r+1)\boldsymbol{f^1})$,
		$(\boldsymbol{e^d} + \boldsymbol{e^q},(r+1)\boldsymbol{f^1})$ for each $q\in Q \backslash \{d\}$,
		and $(\boldsymbol{e^{d}},\boldsymbol{f^t})$ for each $t\in T\backslash\{1\}$ are on the face $\{(x,y)\in P: x_d = y_1 + (r+1)\sum_{t\in T \backslash\{1\}}y_t-r\}$, which implies that the inequality $x_d \leq y_1 + (r+1)\sum_{t\in T \backslash\{1\}}y_t-r$ is facet-defining for polyhedron $P$.
		\qed
	\end{itemize} 
\end{proof}

\section*{Proof of Proposition \ref{propcase4}}
\begin{proof}
	From the definition of $r$ in \eqref{rdef}, we have $ (\boldsymbol 0,r\boldsymbol{f^1}) \in 
	X $. It follows from assumptions {\rm(ii)}, {\rm(iii)} and condition \eqref{third} that 
	the set $X$ can be equivalently written as:
	$$\normalsize\begin{aligned}
	X = \bigg \{(\boldsymbol 0,r \boldsymbol{f^1})\bigg\}\bigcup \bigg\{(x, 
	k) \in \{0,1\}^{|Q|}\! \times\! \mathbb{Z}_+^{|T|}\!:\! x \neq \boldsymbol{e},~k_1= r +1,\!\!\sum_{t \in T\backslash \{1\}}\!\!k_t = 0&\bigg  \} \\
	\bigcup \bigg \{(x, 
	k) \in \{0,1\}^{|Q|} \times \mathbb{Z}_+^{|T|} :k_1\geq r +2,~\!\!\sum_{t \in T\backslash \{1\}}\!\!k_t = 0&\bigg \}\\
	 \bigcup \bigg \{(x, 
	k) \in \{0,1\}^{|Q|} \times \mathbb{Z}_+^{|T|} :\sum_{t \in T\backslash \{1\}}\!\!k_t \geq 1&\bigg \}.
	\end{aligned}$$
	Clear, if $r=0$, the vertices of {polyhedron} $ P $, i.e., $ \conv(X) $, are $(\boldsymbol 0,r\boldsymbol{f^1})$, $(x,(r+1)\boldsymbol{f^1})$ 
	for all $x\in \{0,1\}^{|Q|}$ with $x\neq \boldsymbol 0$ and $x\neq \boldsymbol e$, $(\boldsymbol e, (r+2)\boldsymbol{f^1})$, and $(x,\boldsymbol{f^t})$ for all $x\in \{0,1\}^{|Q|}$ with $x \neq \boldsymbol{0}$ and for all $t\in T\backslash\{1\}$.
	If $r>0$, the additional vertices of polyhedron $P$ are $(\boldsymbol{0},\boldsymbol{f^t})$ for all $t\in T\backslash\{1\}$.
	Similar to the proof in Proposition \ref{propcase1}, by removing redundant 
	constraints and using Lemma \ref{lemma2} and Theorem \ref{solisfacet}, problem \eqref{seppromain} reduces to
	\begin{align}
	&\max_{\alpha,\beta,\gamma} \left\{\sum_{q \in Q} \x_q \alpha_q -\y_1 -\sum_{t\in T\backslash\{1\}}\y_t\beta_t - \gamma: -r -\gamma\leq 0, \right. \nonumber \\
	&\sum_{q \in Q\backslash\{\q\} }\!\!\!\!\alpha_q - (r+1)  -\gamma \leq  0,~ \forall~\q\in Q,~ \sum_{q \in Q}\!\alpha_q-\beta_t - \gamma\leq 0,~ \forall\ t\in T\backslash\{1\}, \nonumber \nonumber \\
	&~~~~\qquad\qquad\qquad\beta_1 = 1,~\beta_t\geq 0,~\forall\ t\in T\backslash\{1\}, ~0\leq  \alpha_q \leq 1, ~\forall~ q \in Q \left.\vphantom{\sum_{q \in Q} \x_q \alpha_q}\right\}.\label{easycases40}
	\end{align}
	We now consider the relaxation of problem \eqref{easycases40} obtained by relaxing the bound constraints $\beta_t \geq 0$ for all $t\in T\backslash\{1\}$.
	As the objective coefficient of $\beta_t$ in problem 
	\eqref{easycases40} 
	is $-\y_t \leq 0$, we can set 
	$\beta_t =  \sum_{q \in Q}\alpha_q -\gamma$ for all $t\in T\backslash\{1\}$ in the relaxation problem. 
	In analogy to the proof in Proposition \ref{propcase3}, substituting them into the objective function and dividing the objective function by the positive value $ 1-\sum_{t\in T\backslash\{1\}}\y_t $, this relaxation problem reduces to: 
	\begin{align}
	&\max_{\alpha,\gamma} \left\{ \sum_{q \in {Q}} 
	\frac{\x_q- \sum_{t\in T\backslash\{1\}}\y_t}{1-\sum_{t\in T\backslash\{1\}}\y_t} \alpha_q - 
	\gamma - \frac{\y_1}{1-\sum_{t\in T\backslash\{1\}}\y_t}: -r -\gamma\leq 0, 
	\right.\nonumber \\
	&~~\qquad\sum_{q \in {Q}\backslash\{\q\} }\alpha_q - (r+1)  -\gamma \leq  0,~ \forall~\q\in {Q},~
	0\leq  \alpha_q \leq 1, ~\forall~ q \in {Q} \left.\vphantom{\sum_{q \in {Q}} \x_q \alpha_q}\right\}.\label{easycases4}
	\end{align}
	\begin{itemize}
		\item[1)] $\tilde{Q}\neq Q$. For $ q \in Q \backslash \tilde{Q} $ where $\tilde{Q}$ is defined in \eqref{Qtilde},  since $\frac{\x_q- \sum_{t\in T\backslash\{1\}}\y_t}{1-\sum_{t\in T\backslash\{1\}}\y_t} < 0 $, we can set $ \alpha_q = 0 $ in problem \eqref{easycases4}.
		This, together with $ \alpha_q \geq 0$ for all $ q \in \tilde{Q} $, implies that the $ |Q| $ constraints $ \sum_{q \in {Q}\backslash\{\q\} }\alpha_q - (r+1)  -\gamma \leq  0 $, for all $ \q \in Q $, in problem \eqref{easycases4}, can be reduced to a single constraint $ \sum_{q \in \tilde{Q}}\alpha_q - (r+1)  -\gamma \leq  0  $ since they are either equivalent to or dominated by this constraint.
		Therefore, in this case, problem \eqref{easycases4} reduces to problem \eqref{easycases3subs}, and repeating 1) and 2) in the proof of Proposition \ref{propcase3}, we have cases (a) and (b) in the statement.
		\item[2)] $\tilde{Q} = Q$ and 
		$|Q| \leq 2$. By Lemma \ref{lpclosedform2},  point $(\boldsymbol{e}, -r) $ is optimal for problem \eqref{easycases4}. For each $ t \in T\backslash\{1\}$, we have $ \beta_t  =\sum_{q \in Q}\alpha_q -\gamma = r+|Q| \geq 0 $ showing that $(\boldsymbol{e}, \boldsymbol{f^1} + (r+|Q|)\sum_{t \in T \backslash\{1\}} \boldsymbol{f^t}, -r)$ is an optimal solution of problem \eqref{easycases40} corresponding to the inequality $\sum_{q\in Q}x_q \leq y_1 + (r+|Q|)\sum_{t\in T \backslash\{1\}}y_t-r  $ for polyhedron $ P $. Moreover, to show that it is facet-defining for polyhedron $P$, we list the $|Q|+|T|$ affinely independent points in polyhedron $ P $ satisfying it on equality: $(\boldsymbol{0},r \boldsymbol{f^1})$, $(\boldsymbol{e^q},(r+1) \boldsymbol{f^1})$ for each $ q \in Q $, and $(\boldsymbol{e}, \boldsymbol{{f}^t})$ for each $ t \in T \backslash\{1\} $. 
			Thus, we have case (c) in the statement.

		\item[3)] $\tilde{Q} = Q$ and $|Q| \geq 3$. By Lemma \ref{lpclosedform2}, one of the three points $(\boldsymbol{e^d}, -r)$, $(\frac{1}{|Q|-1}\boldsymbol{e}, -r)$, and $(\boldsymbol{e}, -r+|Q|-2)$ are optimal for problem \eqref{easycases4}. Using these three points to compute $ \beta_t  =\sum_{q \in Q}\alpha_q -\gamma$, for all $t\in T\backslash\{1\}$, we have $\beta_t= r+1$, $r+\frac{|Q|}{|Q|-1}$, and $ r+2 $, respectively. In all three cases, we have $ \beta_t \geq 0 $ for all $ t \in  T \backslash \{1\} $ and hence one of the three points $(\boldsymbol{e^d},\boldsymbol{f^1}+(r+1)\sum_{t\in T\backslash\{1\}}\boldsymbol{f^t}, -r)$, $(\frac{1}{|Q|-1}\boldsymbol{e}, 
		\boldsymbol{f^1}+(r+\frac{|Q|}{|Q|-1})\sum_{t\in T\backslash\{1\}}\boldsymbol{f^t},-r)$, and $(\boldsymbol{e},\boldsymbol{f^1}+(r+2)\sum_{t\in T\backslash\{1\}}\boldsymbol{f^t}$, $-r+|Q|-2)$ must be optimal for problem \eqref{easycases40}. 
		Finally, the following table shows that the associated inequalities define facets of polyhedron $ P $.
		\begin{table}[H]
			\centering
			\setlength{\tabcolsep}{3pt} 
			\renewcommand{\arraystretch}{2} 
			\centering
			\small 
			\begin{tabular}{|l|l|}
				\hline
				\multirow{3}{4.5cm}{$x_{d}\leq  y_1 + (r+1)\sum_{y\in T\backslash\{1\}} y_t-r$} & $(\boldsymbol{0}, r\boldsymbol{f^1})$, $(\boldsymbol{e^d}, (r+1)\boldsymbol{f^1})$,\\ &$(\boldsymbol{e^d}+\boldsymbol{e^q}, (r+1)\boldsymbol{f^1})$ for each  $q\in Q\backslash\{d\}$,\\
				&$(\boldsymbol{e^d}, \boldsymbol{f^t})$ for each  $t\in T\backslash\{1\}$.\\
				\hline
				\multirow{2}{4.5cm}{$\frac{1}{|Q|-1}\sum_{q \in Q} x_q\leq y_1 +(r+\frac{|Q|}{|Q|-1})\sum_{y\in T\backslash\{1\}} y_t-r$}& $(\boldsymbol{0}, r\boldsymbol{f^1})$, $(\boldsymbol{e}-\boldsymbol{e^q}, (r+1)\boldsymbol{f^1})$ for each  $q\in Q$,\\
				&$(\boldsymbol{e}, \boldsymbol{f^t})$ for each  $t\in T\backslash\{1\}$.\\
				\hline
				$\sum_{q\in Q}x_q\leq y_1+$ & $(\boldsymbol{e}-\boldsymbol{e^q}, (r+1)\boldsymbol{f^1})$ for each  $q\in Q$, \\
				$(r+2) \sum_{y\in T\backslash\{1\}} y_t-r+|Q|-2$ & $(\boldsymbol{e}, \boldsymbol{f^t})$ for each  $t\in T\backslash\{1\}$, $(\boldsymbol{e}, (r+2)\boldsymbol{f^1})$.\\
				\hline
			\end{tabular}
		\end{table}
	 	Thus, we have case (d) in the statement. This completes the proof.\qed
	\end{itemize}
\end{proof}

\end{document}